
\documentclass[11pt,oneside]{amsart}
\usepackage{amssymb,amsmath,amsfonts,latexsym,amsthm,geometry,graphicx}
\usepackage{amsmath}
\usepackage{amsfonts}
\usepackage{color, soul}
\usepackage{amssymb}
\usepackage{graphicx}
\usepackage[all]{xy}

\setcounter{MaxMatrixCols}{10}

\geometry{left=2.5cm,right=2.5cm,top=3cm,bottom=2cm,headheight=2.5mm}

\newtheorem{theorem}{Theorem}[section]
\newtheorem{corollary}[theorem]{Corollary}
\newtheorem{proposition}[theorem]{Proposition}
\newtheorem{lemma}[theorem]{Lemma}

\theoremstyle{definition}

\newtheorem{remark}[theorem]{Remark}
\newtheorem{definition}[theorem]{Definition}

\begin{document}
\title{An index of summability for pairs of Banach spaces}
\author[Mariana Maia]{M. Maia}
\address{Departamento de Matem\'{a}tica\\
Universidade Federal da Para\'{\i}ba \\
58.051-900 - Jo\~{a}o Pessoa, Brazil.}
\email{mariana.britomaia@gmail.com }
\author[Daniel Pellegrino]{D. Pellegrino}
\address{Departamento de Matem\'{a}tica \\
Universidade Federal da Para\'{\i}ba \\
58.051-900 - Jo\~{a}o Pessoa, Brazil.}
\email{dmpellegrino@gmail.com }
\author[Joedson Santos]{J. Santos}
\address{Departamento de Matem\'{a}tica \\
Universidade Federal da Para\'{\i}ba \\
58.051-900 - Jo\~{a}o Pessoa, Brazil.}
\email{joedsonmat@gmail.com}
\keywords{Multilinear operators, Banach spaces}
\thanks{2010 Mathematics Subject Classification: }
\thanks{}
\maketitle

\begin{abstract}
We introduce the notion of index of summability for pairs of Banach spaces; for Banach spaces $E,F$, this index plays the
role of a kind of measure of how the $m$-homogeneous polynomials from $E$ to $F$ are far from being absolutely summing. In some
cases the optimal index of summability is computed.
\end{abstract}

\tableofcontents

\section{Introduction and background}

For $1\leq q\leq p<\infty $ and Banach spaces $E,F$ over $\mathbb{K}=\mathbb{%
R}$ or $\mathbb{C},$ we recall that a continuous linear operator $%
u:E\rightarrow F$ is absolutely $(p,q)$-summing if there is a constant $%
C\geq 0$ such that
\begin{equation}
\left( \sum_{k=1}^{n}\left\Vert u(x_{k})\right\Vert ^{p}\right) ^{\frac{1}{p}%
}\leq C\sup_{\varphi \in B_{E^{\ast }}}\left( \sum_{k=1}^{n}\left\vert
\varphi (x_{k})\right\vert ^{q}\right) ^{\frac{1}{q}}  \label{pq}
\end{equation}%
for every $n\in \mathbb{N}$ and $x_{1},...,x_{n}\in E.$ Above, and from now
on the topological dual of $E$ and its closed unit ball are denoted by $%
E^{\ast }$ and $B_{E^{\ast }}$, respectively.

The space of absolutely $(p,q)$-summing linear operators from $E$ to $F$ is
denoted by $\Pi _{(p,q)}\left( E;F\right) $. The $(p,q)$-summing norm of $u$%
, defined as the infimum of the constants $C$ in (\ref{pq}), is represented
by $\pi _{p,q}(u)$. If $p=q$ the operator $u$ is simply called absolutely $p$%
-summing and write $\Pi _{p}\left( E;F\right) $ and $\pi _{p}(u)$ for the
space of absolutely $p$-summing operators and the $p$-summing norm of $u$,
respectively. For the theory of absolutely summing operators we refer to
\cite{Diestel}.

When only sequences $(x_{j})_{j=1}^{n}$ of fixed length $n$ are considered,
the infimum over all $C$ satisfying (\ref{pq}) is denoted by $\pi
_{p,q}^{(n)}(u)$ (or $\pi _{p}^{(n)}(u)$ when $p=q$). Of course, $\pi
_{p,q}^{(n)}(u)\leq \pi _{p,q}(u)$. In \cite{Szarek, TJ} the authors
investigated in depth estimates of the type
\begin{equation*}
\pi _{p,q}(u)\leq c\pi _{p,q}^{(n)}(u),
\end{equation*}%
where $c$ is a positive constant. These estimates show that the $(p,q)$%
-summing norm of an operator can be sometimes well-approximated using only
\textquotedblleft few\textquotedblright\ vectors in the definition of the $%
(p,q)$-summing norm. The following results of finite-dimensional nature will
be crucial in this paper:

\begin{theorem}
\bigskip (Szarek \cite{Szarek}) \label{szarek} There exists a universal
constant $C$ such that whenever $u:E\rightarrow F$ ($E,F$ are Banach spaces)
is a finite rank linear operator (say $\mathrm{rank}(u)=n$) and $q \geq 2,$ then
\begin{equation*}
\pi _{q,2}(u)\leq C\pi _{q,2}^{(n)}(u).
\end{equation*}
\end{theorem}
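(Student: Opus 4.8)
The plan is to reduce the statement, in three stages, to a purely finite\nobreakdash-dimensional comparison between the value of a $q$-th moment on a Parseval frame and its value on an orthonormal basis: first I would \emph{linearize} the summing norm through $\ell_2^m$, then use the rank hypothesis to \emph{factor through the coimage}, and finally attack the resulting frame inequality. The last stage is where the real work lies.

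First I would record the standard reformulation of the $(q,2)$-summing norm. A finite sequence $(x_k)_{k=1}^{m}$ in $E$ with $\sup_{\varphi\in B_{E^{\ast}}}\bigl(\sum_k|\varphi(x_k)|^2\bigr)^{1/2}\le 1$ is exactly a linear map $v:\ell_2^m\to E$ with $\|v\|\le 1$ and $v(e_k)=x_k$, where $(e_k)$ is the canonical basis. Writing $\sigma_q(T):=\bigl(\sum_k\|Te_k\|^q\bigr)^{1/q}$ for $T:\ell_2^m\to F$, one then has $\pi_{q,2}(u)=\sup_m\sup_{\|v\|\le 1}\sigma_q(uv)$ and $\pi_{q,2}^{(n)}(u)=\sup_{v:\ell_2^n\to E,\ \|v\|\le 1}\sigma_q(uv)$. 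Since $(q,2)$-summing norms are unchanged when the target is restricted to $\overline{u(E)}$, and $u$ has finite rank, I would replace $F$ by the $n$-dimensional space $G:=u(E)$. Everything thus reduces to the finite-dimensional assertion: for every $w:=uv:\ell_2^m\to G$ with $\|w\|\le 1$, one must bound $\sigma_q(w)^q=\sum_{k=1}^m\|we_k\|^q$ by a universal multiple of $\sup\bigl\{\sum_{j=1}^n\|wbe_j\|^q:\ b:\ell_2^n\to\ell_2^m,\ \|b\|\le 1\bigr\}$.

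Next I would exploit the rank. As $\mathrm{rank}(w)=r\le n$, it factors as $w=\widetilde{w}\,P$, where $P$ is the orthogonal projection of $\ell_2^m$ onto $R:=(\ker w)^{\perp}$, $\dim R=r$, and $\widetilde{w}:=w|_{R}$ satisfies $\|\widetilde{w}\|\le 1$. The vectors $y_k:=Pe_k$ form a Parseval frame of the $r$-dimensional Hilbert space $R$: the frame identity $\sum_k\langle z,y_k\rangle y_k=z$ gives $\sum_k\|y_k\|_2^2=r$, each $\|y_k\|_2\le 1$, and weak-$\ell_2$ norm equal to $1$. Since $\|we_k\|=\|\widetilde{w}y_k\|$, the task becomes to control $\sum_{k=1}^m\|\widetilde{w}y_k\|^q$ for a possibly very long Parseval frame by the best length-$\le n$ weakly $\ell_2$-summable system. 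Here I would use a crucial observation: any Parseval frame of length $N\le n$ is itself admissible on the right-hand side, since its synthesis map is a co-isometry, hence a contraction; in particular every orthonormal basis $(h_j)_{j=1}^{r}$ of $R$ is admissible. It therefore suffices to prove the comparison $\sum_{k=1}^m\|\widetilde{w}y_k\|^q\le C^{q}\max\bigl\{\sum_{j=1}^{r}\|\widetilde{w}h_j\|^q:\ (h_j)\text{ an orthonormal basis of }R\bigr\}$ with $C$ independent of everything.

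This frame-versus-basis inequality is the heart of the matter and the step I expect to be the main obstacle, precisely because the norm on $G$ is an arbitrary $n$-dimensional Banach norm. Writing $\|\widetilde{w}x\|=\sup_{g\in K}\langle x,g\rangle$ with $K:=\widetilde{w}^{\,\ast}(B_{G^{\ast}})\subseteq B_{\ell_2^r}$, it is a statement about support functions of symmetric convex bodies inside the Euclidean ball, and the naive estimate $\|\widetilde{w}y_k\|\le\|y_k\|_2$ is far too lossy: it only yields the bound $r$, which is useless when $\widetilde{w}$ contracts strongly and the right-hand side is of order $r^{1-q/2}$. I would split the exponent into two regimes. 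For $q=2$ the desired inequality is exactly of the kind established by Tomczak--Jaegermann for the $2$-summing norm of rank-$n$ operators, where one passes to an auxiliary Euclidean structure and controls the gap between a frame and an orthonormal basis by a fixed factor; I would import that mechanism verbatim. For general $q\ge 2$ I would absorb the surplus power through the weighting identity $\sum_k\|\widetilde{w}y_k\|^q=\sum_k\|\widetilde{w}y_k\|^{q-2}\|\widetilde{w}y_k\|^2$, isolating the few ``spiky'' directions (which, being close to orthonormal, are captured by an admissible short system) from the ``spread'' part of the frame, whose $q$-th moment is automatically small because its entries have small Euclidean norm and $q\ge 2$; quantifying this separation uniformly would be done by a selection/restricted-invertibility estimate in the spirit of Bourgain--Tzafriri. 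Verifying that the selection, and hence the comparison, holds with a constant depending on neither $n$ nor $q$ is the delicate point, and it is exactly there that the universal constant $C$ of the statement is produced.
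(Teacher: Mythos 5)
Your two preliminary reductions are correct and standard: identifying weakly $2$-summable sequences of length $m$ with contractions $v\colon\ell_2^m\to E$, using injectivity of the ideal $\Pi_{q,2}$ to replace $F$ by the $n$-dimensional range $u(E)$, factoring $w=uv$ through $(\ker w)^{\perp}$ so that the vectors $Pe_k$ form a Parseval frame of an $r$-dimensional Hilbert space $R$ with $r\le n$, and observing that any orthonormal basis of $R$ is an admissible competitor on the right-hand side (its synthesis map being a contraction). But these steps are the routine part; they merely restate the theorem in an equivalent finite-dimensional form. The entire content of Szarek's theorem is the comparison you arrive at in your third stage --- bounding the $q$-th moment of an arbitrarily long Parseval frame under $\widetilde{w}$ by a universal multiple of the best $q$-th moment over systems of length at most $n$ --- and this you do not prove. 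For $q=2$ you propose to ``import verbatim'' the mechanism of Tomczak-Jaegermann's theorem ($\pi_2(u)\le\sqrt{2}\,\pi_2^{(n)}(u)$ for rank-$n$ operators), which is an appeal to another nontrivial theorem rather than an argument; for $q>2$, the splitting $\|\widetilde{w}y_k\|^q=\|\widetilde{w}y_k\|^{q-2}\|\widetilde{w}y_k\|^2$ together with a ``spiky versus spread'' dichotomy and a Bourgain--Tzafriri-type selection is a hope, not a proof. Note that the naive execution of such a splitting genuinely fails: estimating $\sum_k\|\widetilde{w}y_k\|^2$ by $\pi_2$-quantities and converting back to $q$-th moments costs a factor of order $n^{\frac{1}{2}-\frac{1}{q}}$, which is unbounded in $n$; avoiding this loss with a constant independent of both $n$ and $q$ is precisely where the substantial work of \cite{Szarek} lies. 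You flag this yourself as ``the delicate point,'' which is an accurate self-assessment: what you have is a correct reduction plus a research plan, with the theorem's actual difficulty left intact.

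For context, the paper does not prove this statement either: it is imported from \cite{Szarek} and used as a black box in the proofs of Lemma \ref{lemar} and Theorems \ref{cotipo} and \ref{realcommpar}. So there is no internal proof to compare against; judged on its own terms, your attempt leaves the essential step open and therefore does not establish the result.
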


\begin{theorem}
(K\"{o}nig, Retherford, Tomczak-Jaegermann \cite{Konig}) \label{konig} Let $%
id_{X_{n}}$ denote the identity on a $n$-dimensional space $X_{n}$. For $q>2$%
, we have
\begin{equation*}
(2e)^{-1}n^{\frac{1}{q}}\leq \pi _{q,2}(id_{X_{n}}).
\end{equation*}
\end{theorem}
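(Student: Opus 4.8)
The plan is to deduce the bound from the cleaner $2$-summing estimate $\pi_2(id_{X_n})\ge\sqrt n$, and then to trade the exponent $2$ for $q$ by restricting to $n$ vectors and invoking Szarek's theorem (Theorem \ref{szarek}) at the exponent $2$. The point is that the $n^{1/q}$ behaviour is \emph{soft}; only the explicit constant requires care.

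First I would prove that $\pi_2(id_{X_n})\ge\sqrt n$ for every $n$-dimensional space. Put $X_n$ in John's position, so that the ellipsoid of maximal volume inscribed in $B_{X_n}$ is the Euclidean unit ball $B_2$; then John's theorem furnishes contact points $u_1,\dots ,u_m$ (with $\|u_j\|=|u_j|_2=1$) and weights $c_j>0$ satisfying $\sum_j c_j\,u_j\otimes u_j=I$ and $\sum_j c_j=n$. I would test the definition of $\pi_2$ with the system $x_j=\sqrt{c_j}\,u_j$. Since $B_2\subseteq B_{X_n}$, polarity gives $B_{X_n^{\ast}}\subseteq B_2$, so for every $\varphi\in B_{X_n^{\ast}}$ the decomposition of the identity yields
\begin{equation*}
\sum_j c_j|\varphi(u_j)|^2=\langle I\varphi,\varphi\rangle=|\varphi|_2^2\le 1 ,
\end{equation*}
so the weak-$2$ quantity $\sup_{\varphi\in B_{X_n^{\ast}}}(\sum_j|\varphi(x_j)|^2)^{1/2}$ is at most $1$, whereas $\sum_j\|x_j\|^2=\sum_j c_j=n$. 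Hence $\pi_2(id_{X_n})\ge\sqrt n$. It is worth stressing why the inner exponent $2$ is decisive here: the relevant weight sum is $\sum_j c_j^{q/2}$, which collapses to $\sum_j c_j=n$ exactly when $q=2$, so no convexity loss occurs; for $q>2$ this same system is genuinely lossy, which is why one cannot attack $\pi_{q,2}$ directly by this frame.

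Next I would transfer the estimate to $\pi_{q,2}$ using only $n$ vectors. For any fixed $x_1,\dots ,x_n$, Hölder's inequality in $\mathbb{R}^n$ gives $(\sum_{k=1}^n\|u x_k\|^2)^{1/2}\le n^{1/2-1/q}(\sum_{k=1}^n\|u x_k\|^q)^{1/q}$, and dividing by the common weak-$2$ factor and taking the supremum over systems of length $n$ produces $\pi_2^{(n)}(u)\le n^{1/2-1/q}\pi_{q,2}^{(n)}(u)$. Applying this together with Szarek's theorem (Theorem \ref{szarek}) to $u=id_{X_n}$, whose rank is exactly $n$, I get the chain
\begin{equation*}
\sqrt n\le\pi_2(id_{X_n})\le C\,\pi_2^{(n)}(id_{X_n})\le C\,n^{1/2-1/q}\pi_{q,2}^{(n)}(id_{X_n})\le C\,n^{1/2-1/q}\pi_{q,2}(id_{X_n}),
\end{equation*}
so that $\pi_{q,2}(id_{X_n})\ge C^{-1}n^{1/q}$.

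The hard part is entirely quantitative. The argument above yields the sharp power $n^{1/q}$ for \emph{every} $n$-dimensional space at once, but to reach the stated constant $(2e)^{-1}$ one must control the universal constant $C$ emerging from Szarek's finite-rank approximation theorem and verify $C\le 2e$; this is where a careful reading of the constant in Theorem \ref{szarek} is needed. The conceptual core, by contrast, is the uniform inequality $\pi_2(id_{X_n})\ge\sqrt n$, for which John's decomposition of the identity and the special role of the inner index $2$ are the essential ingredients.
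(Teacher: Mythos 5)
The first thing to note is that the paper contains no proof of this statement at all: Theorem \ref{konig} is imported verbatim from K\"onig--Retherford--Tomczak-Jaegermann \cite{Konig}, so the only possible comparison is with the original argument, which is spectral: one shows that the eigenvalue sequence of a $(q,2)$-summing operator lies in $\ell_q$ with $\ell_q$-norm controlled by $\pi_{q,2}$, and applies this to the identity, whose $n$ eigenvalues all equal $1$; that eigenvalue theorem is where the explicit factor $2e$ comes from. Your route is genuinely different and, step by step, sound: the John's-position computation correctly gives $\pi_2(id_{X_n})\ge\sqrt n$ (though you could simply quote the paper's Theorem \ref{normaidentidade}, Pietsch's exact equality $\pi_2(id_E)=\sqrt n$, and skip John entirely); the H\"older step $\pi_2^{(n)}(u)\le n^{1/2-1/q}\pi_{q,2}^{(n)}(u)$ is right; and Theorem \ref{szarek} as stated does apply at $q=2$ to the rank-$n$ operator $id_{X_n}$, so the chain $\sqrt n\le\pi_2(id_{X_n})\le C\pi_2^{(n)}(id_{X_n})\le Cn^{1/2-1/q}\pi_{q,2}(id_{X_n})$ is legitimate. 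Amusingly, this is exactly the toolkit the paper itself builds (Pietsch plus Szarek plus an inclusion/H\"older step, compare Lemma \ref{lemar} and part (a) of Theorem \ref{cotipo}), turned around to rederive the cited theorem.

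The gap is the one you flag yourself, and it is real relative to the statement as written: you obtain $\pi_{q,2}(id_{X_n})\ge C^{-1}n^{1/q}$ where $C$ is the unspecified universal constant of Theorem \ref{szarek}, and nothing in that theorem (as stated in the paper, or in \cite{Szarek}) certifies $C\le 2e$. So what you have proved is the weaker assertion that \emph{some} universal $c>0$ satisfies $c\,n^{1/q}\le\pi_{q,2}(id_{X_n})$, not the inequality with $(2e)^{-1}$. Two remarks temper this. First, the weaker form is all the paper ever uses: Lemma \ref{lemar}, Theorem \ref{cotipo}(d) and Theorem \ref{realcommpar}(d) invoke Theorem \ref{konig} only through the existence of a positive constant $A$, so your version would serve every application here. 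Second, the gap can be closed inside your own scheme: for $2$-summing norms of rank-$n$ operators there is the classical \emph{explicit} estimate $\pi_2(u)\le\sqrt 2\,\pi_2^{(n)}(u)$ of Tomczak-Jaegermann \cite{TJ}, and substituting it for Szarek in your chain yields $\pi_{q,2}(id_{X_n})\ge 2^{-1/2}n^{1/q}$, which is in fact stronger than the stated bound since $2^{-1/2}>(2e)^{-1}$.
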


From now on, as usual, given $x_{1},...,x_{n}\in E$, we define
\begin{equation*}
\Vert (x_{k})_{k=1}^{n}\Vert _{w,p}:=\sup_{\varphi \in B_{E^{\ast }}}\left(
\sum_{k=1}^{n}\left\vert \varphi (x_{k})\right\vert ^{p}\right) ^{\frac{1}{p}%
}.
\end{equation*}

Let $m\in \mathbb{N}$ and $E_{1},...,E_{m}$ be Banach spaces over $\mathbb{K}
$. By $\mathcal{L}\left( E_{1},\dots ,E_{m};F\right) $ we denote the Banach
space of all bounded $m$-linear operators from $E_{1}\times \cdots \times
E_{m}$ into $F$. In the case $E_{1}=\cdots =E_{m}=E$, we will simply write $%
\mathcal{L}\left( ^{m}E;F\right) $, whereas $\mathcal{L}\left( E;F\right) $
is the usual Banach space of all continuous linear operators from $E$ to $F$%
. For the theory of multilinear operators and polynomials between Banach
spaces we refer to the excellent books of Dineen \cite{dineen} and Mujica
\cite{mujica}.

For $1\leq q\leq p<\infty $, an $m$-linear operator $T\in \mathcal{L}%
(E_{1},...,E_{m};F)$ is called \textit{multiple} $\left( p,q\right) $\textit{%
-summing} (\cite{collec, pgtese}) if there is a constant $C\geq 0$ such that
\begin{equation}
\left( \sum_{k_{1},...,k_{m}=1}^{n}\left\Vert T\left(
x_{k_{1}}^{(1)},...,x_{k_{m}}^{(m)}\right) \right\Vert ^{p}\right) ^{\frac{1%
}{p}}\leq C\prod_{i=1}^{m}\left\Vert \left( x_{k_{i}}^{(i)}\right)
_{k_{i}=1}^{n}\right\Vert _{w,q}  \label{t}
\end{equation}%
for all positive integers $n$ and all $x_{k}^{(i)}\in E_{i}$, with $1\leq
k\leq n$ and$\,1\leq i\leq m$. The vector space of all multiple $(p,q)$%
-summing operators is denoted by $\Pi _{(p,q)}^{mult}\left( E_{1},\dots
,E_{m};F\right) $. The infimum, $\pi _{\left( p,q\right) }^{mult}(T)$, taken
over all possible constants $C$ satisfying (\ref{t}) defines a complete norm
in $\Pi _{\left( p,q\right) }^{mult}\left( E_{1},\dots ,E_{m};F\right) $.
When $E_{1}=\cdots =E_{m}=E$, we write $\Pi _{(p,q)}^{mult}\left(
^{m}E;F\right) $. When $k_{1}=\cdots =k_{m}=k$, we recover the definition of
the class of absolutely $\left( p,q\right) $-summing $m$-linear operators
that will be denoted by $\left( \Pi _{(p,q)}\left( E_{1},\dots
,E_{m};F\right) ;\pi _{\left( p,q\right) }(\cdot )\right) $.

For polynomials, let $\mathcal{P}(^{m}E; F)$ denote the Banach space of $m$-homogeneous polynomials from $E$ into $F$, we recall that given
$1\leq p,q<\infty ,$ with $p\geq \frac{q}{m},$ a polynomial $P \in \mathcal{P%
}(^{m}E;F)$ is \textit{absolutely} $\left( p,q\right) $\textit{-summing} if
there is a constant $C\geq 0$ such that%
\begin{equation}
\left( \sum_{k=1}^{n}\left\Vert P(x_{k})\right\Vert ^{p}\right) ^{\frac{1}{p}%
}\leq C\left\Vert \left( x_{k}\right) _{k=1}^{n}\right\Vert _{w,q}^{m}
\label{8866}
\end{equation}%
for all positive integers $n$ and all $x_{k}\in E$, with $1\leq k\leq n$. We
denote by $\mathcal{P}_{(p,q)}\left( ^{m}E;F\right) $ the Banach space of
all absolutely $\left( p,q\right) $-summing polynomials from $E$ to $F$.

\bigskip Of course, when (\ref{t}) or (\ref{8866}) is not valid, this means
that such a constant $C$ does not exist. However it is not obvious at a
first glance that there exists a constant $C_{n}$ depending on $n$
satisfying (\ref{t}) or (\ref{8866}), since at least formally it could
happen that varying the vectors $x_{1},....,x_{n}$ the constant could tend
to infinity. But it is not difficult to prove that this is not the case and
when (\ref{t}) or (\ref{8866}) fails there will exist a constant $C_{n}$
that makes the inequality true. We shall also observe that in all cases a
constant $C_{n}=C_{1}n^{s}$ can be found for a certain $s$ depending on $%
p,q,m$. Note that the number $s$ plays the role of a kind of index of (non)
summability: when $s=0$ the operator is multiple $\left( p,q\right) $%
-summing and when $s$ cannot be chosen to be zero, the map is not multiple $%
\left( p,q\right) $-summing and the \textquotedblleft
optimal\textquotedblright\ value of $s$ can be naturally identified as an
index of (non) summability. In this case, as the \textquotedblleft
optimal\textquotedblright\ value of $s$ grows, we can say that more far from
being multiple $\left( p,q\right) $-summing the map is. We shall adopt a
slightly different approach. Instead of defining the index of summability $s$
as we have just remarked we shall define the index of summability of a pair $%
\left( E_{1}\times \cdots \times E_{m},F\right) $ as follows. The following
definition is inspired by the paper \cite{gusss}, where a kind of index of
summability was investigated for Hardy--Littlewood type inequalities.

\begin{definition}
The \textit{multilinear} $m$-\textit{index of} $\left( p,q\right) $-\textit{%
summability} of a pair $\left( E_{1}\times \cdots \times E_{m},F\right) $ is
defined as
\begin{equation*}
\mathbb{\eta }_{(p,q)}^{m-mult}\left( E_{1},...,E_{m};F\right) =\inf
s_{m,p,q},
\end{equation*}%
where $s_{m,p,q}$ satisfies the following:

There is a constant $C\geq 0$ (depending only on $m\text{ and }T$)
satisfying
\begin{equation*}
\left( \sum_{k_{1},...,k_{m}=1}^{n}\left\Vert T\left(
x_{k_{1}}^{(1)},...,x_{k_{m}}^{(m)}\right) \right\Vert ^{p}\right) ^{\frac{1%
}{p}}\leq Cn^{s_{m,p,q}}\prod_{i=1}^{m}\left\Vert \left(
x_{k_{i}}^{(i)}\right) _{k_{i}=1}^{n}\right\Vert _{w,q}
\end{equation*}%
for every $T\in \mathcal{L}(E_{1},...,E_{m};F)$ and all positive integers $n$
and $x_{k_{i}}^{(i)}\in E_{i}$, with $1\leq k_{i}\leq n\text{ and }1\leq
i\leq m$.

When $E_{1}=\cdots =E_{m}=E$, we write $\mathbb{\eta }_{(p,q)}^{m-mult}%
\left( E;F\right) $ instead $\mathbb{\eta }_{(p,q)}^{m-mult}\left(
E,...,E;F\right) $.
\end{definition}

Similarly the \textit{polynomial} $m$-\textit{index of} $\left( p,q\right) $-%
\textit{summability} of a pair of Banach spaces $\left( E,F\right) $ is
defined as
\begin{equation*}
\mathbb{\eta }_{(p,q)}^{m-pol}\left( E,F\right) =\inf s_{m,p,q},
\end{equation*}%
where $s_{m,p,q}$ satisfies the following:

There is a constant $C>0$ (depending only of $m\text{ and }P$) satisfying
\begin{equation*}
\left( \sum_{j=1}^{n}\left\Vert P(x_{j})\right\Vert ^{p}\right) ^{\frac{1}{p}%
}\leq Cn^{s_{m,p,q}}\left\Vert \left( x_{j}\right) _{j=1}^{n}\right\Vert
_{w,q}^{m}
\end{equation*}%
for every $P\in \mathcal{P}(^{m}E;F)$, all positive integers $n$ and all $%
x_{j}\in E$, with $1\leq j\leq n.$

When $m=1$, we have $\Pi _{(p,q)}^{mult}\left( ^{1}E;F\right) =\mathcal{P}%
_{(p,q)}\left( ^{1}E;F\right) =\Pi _{(p,q)}\left( E;F\right) $ and in this
case we will simply write $\mathbb{\eta }_{(p,q)}\left( E;F\right) $.


\section{Basic results}

One of the cornerstones of the theory of absolutely $p$-summing linear
operators is the Dvoretzky--Rogers Theorem. A weak version of this theorem
asserts that if $p\geq 1$ and $E$ is a Banach space, then the identity
operator on $E$, denoted by $id_{E}$, is absolutely $p$-summing if and only
if $E$ is finite dimensional. The main goal of this section is to certify
that the index of summability is always finite. The next result provides the
$2$-summing norm of the identity operator when $E$ is finite dimensional and
will be very important for us:

\begin{theorem} 
(Pietsch \cite{Piet}) \label{normaidentidade} If $E$ is a Banach space and $\dim E=n$,
then $\pi _{2}(id_{E})=\sqrt{n}.$
\end{theorem}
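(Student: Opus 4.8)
The plan is to reduce the computation of $\pi_2(id_E)$ to the Euclidean geometry of the unit ball $B_E$ via John's theorem on the maximal-volume inscribed ellipsoid. After a linear change of variables I would assume that this ellipsoid is the unit ball $B_2$ of an auxiliary inner product $\langle\cdot,\cdot\rangle$ on $E$, so that $B_2\subseteq B_E$ (equivalently $\|x\|\le |x|$, where $|x|=\langle x,x\rangle^{1/2}$). John's theorem then furnishes contact points $u_1,\dots,u_m\in\partial B_E\cap\partial B_2$ (so $|u_i|=\|u_i\|=1$) and weights $c_i>0$ with $\sum_{i=1}^m c_i=n$ and the resolution of the identity $\sum_{i=1}^m c_i\langle x,u_i\rangle u_i=x$ for all $x\in E$. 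The first thing I would record is that each functional $\varphi_i:=\langle\cdot,u_i\rangle$ belongs to $B_{E^*}$: any supporting functional of $B_E$ at the contact point $u_i$ also supports the smaller ball $B_2$ there, and by smoothness of $B_2$ it must coincide with the Euclidean supporting functional $\langle\cdot,u_i\rangle$, whence $\|\varphi_i\|_{E^*}=1$.

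For the upper bound, given $x_1,\dots,x_N\in E$ I would estimate
\[
\sum_{k=1}^N\|x_k\|^2\le\sum_{k=1}^N|x_k|^2=\sum_{k=1}^N\Big\langle\sum_{i=1}^m c_i\langle x_k,u_i\rangle u_i,\,x_k\Big\rangle=\sum_{i=1}^m c_i\sum_{k=1}^N|\varphi_i(x_k)|^2.
\]
Since each $\varphi_i\in B_{E^*}$, the inner sum is at most $\|(x_k)_{k=1}^N\|_{w,2}^2$, and using $\sum_i c_i=n$ the right-hand side is bounded by $n\,\|(x_k)_{k=1}^N\|_{w,2}^2$. This yields $\pi_2(id_E)\le\sqrt n$.

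For the lower bound, I would test the defining inequality on the finite family $x_i:=\sqrt{c_i}\,u_i$, $i=1,\dots,m$. The left-hand side equals $\sum_i c_i\|u_i\|^2=\sum_i c_i=n$. For the weak norm, any $\varphi\in B_{E^*}$ can be written as $\langle\cdot,v\rangle$ with $|v|\le 1$ (because $B_2\subseteq B_E$ forces $B_{E^*}\subseteq B_2$ under the self-duality of the Euclidean ball), so the resolution of the identity gives $\sum_i c_i|\varphi(u_i)|^2=\langle\sum_i c_i\langle v,u_i\rangle u_i,v\rangle=|v|^2\le 1$; hence $\|(\sqrt{c_i}\,u_i)_{i=1}^m\|_{w,2}\le 1$. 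Comparing the two sides forces $\pi_2(id_E)\ge\sqrt n$, and together with the previous step this gives the claimed equality.

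The main obstacle is not the two norm estimates, which are short once the geometric data is in place, but rather the correct invocation of John's theorem: one must have the resolution of the identity with total weight exactly $\dim E$, and --- the delicate point --- know that the contact-point functionals $\langle\cdot,u_i\rangle$ genuinely lie in $B_{E^*}$ rather than merely in the Euclidean ball. I would treat the real scalar case first, where John's theorem is classical, and then note that the complex case follows from the analogous complex version (or by a routine realification applied to the inner-product structure).
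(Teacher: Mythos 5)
The paper offers no proof of this statement to compare against: it is imported as a black box from Pietsch's book \cite{Piet} (and then only used through Corollary \ref{lemaidentidade} and Theorems \ref{cotipo} and \ref{realcommpar}), so your proposal has to be judged on its own merits. Judged that way, it is correct, and it is essentially the classical John-ellipsoid proof of this equality (the argument usually attributed to Garling and Gordon). The two delicate points are exactly the ones you isolate, and you handle both properly: (i) the contact functionals $\varphi_i=\langle\cdot,u_i\rangle$ lie in $B_{E^*}$, which follows, as you say, because a supporting functional of $B_E$ at $u_i$ also supports $B_2\subseteq B_E$ at $u_i$, and smoothness of the Euclidean ball forces it to be $\langle\cdot,u_i\rangle$; (ii) dualizing $B_2\subseteq B_E$ gives $B_{E^*}\subseteq B_2$, which is what makes the weak norm of the test family $\left(\sqrt{c_i}\,u_i\right)_{i=1}^m$ at most $1$, while its strong $\ell_2$-sum is exactly $\sqrt{n}$ by the trace identity $\sum_i c_i=n$. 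Together with the upper-bound chain $\sum_k\|x_k\|^2\leq\sum_k|x_k|^2=\sum_i c_i\sum_k|\varphi_i(x_k)|^2\leq n\left\Vert (x_k)_k\right\Vert_{w,2}^2$, this gives both inequalities. The only loose end is the complex scalar case, which you flag but do not settle: the decomposition form of John's theorem is classical over $\mathbb{R}$, whereas the complex analogue is true but less standard, and ``routine realification'' is not quite routine here (realifying doubles the dimension and changes the class of admissible functionals in the weak norm). If you want to avoid that issue entirely, note that the lower bound $\pi_2(id_E)\geq\sqrt{n}$ also follows, over either scalar field, from the trace argument: the composition of two $2$-summing operators is nuclear with $\nu(vu)\leq\pi_2(v)\pi_2(u)$, so $n=\mathrm{tr}(id_E)\leq\pi_2(id_E)^2$; then John's ellipsoid is needed only for the upper bound.
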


We highlight the following corollary of the above theorem for future
reference. Note that below we extrapolate the notion of absolutely $p$%
-summing operators to $p>0.$

{}

\begin{corollary}
\label{lemaidentidade}Let $0<p<\infty $. If $E$ is a normed space and $\dim
E=n$, then
\begin{equation}
\pi _{p}(id_{E})\leq n^{\max \left\lbrace \frac{1}{p},\frac{1}{2}\right\rbrace }.  \label{lema}
\end{equation}
\end{corollary}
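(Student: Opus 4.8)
The plan is to separate the two regimes, since $\max\{1/p,1/2\}$ equals $1/2$ when $p\geq 2$ and $1/p$ when $0<p\leq 2$, and to use Pietsch's value $\pi_{2}(id_{E})=\sqrt{n}$ (Theorem~\ref{normaidentidade}) as the anchor in both. For $p\geq 2$ the estimate is immediate: the absolutely summing norms are non-increasing in the exponent (the inclusion theorem for summing operators, see \cite{Diestel}), so
\[
\pi_{p}(id_{E})\leq \pi_{2}(id_{E})=\sqrt{n}=n^{1/2},
\]
which is exactly $n^{\max\{1/p,1/2\}}$ in this range. This part needs nothing beyond the inclusion theorem and Pietsch's theorem.

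For $0<p\leq 1$ I would argue by hand with an Auerbach basis. Choose $(e_{i})_{i=1}^{n}$ in $E$ and $(\varphi_{i})_{i=1}^{n}$ in $E^{\ast}$ with $\Vert e_{i}\Vert=\Vert\varphi_{i}\Vert=1$ and $\varphi_{i}(e_{j})=\delta_{ij}$, so that every $x$ satisfies $\Vert x\Vert\leq\sum_{i=1}^{n}|\varphi_{i}(x)|$. Raising to the power $p\leq 1$ and using subadditivity of $t\mapsto t^{p}$ gives $\Vert x_{k}\Vert^{p}\leq\sum_{i=1}^{n}|\varphi_{i}(x_{k})|^{p}$, and summing over $k$ while using $\varphi_{i}\in B_{E^{\ast}}$ yields
\[
\sum_{k}\Vert x_{k}\Vert^{p}\leq\sum_{i=1}^{n}\sum_{k}|\varphi_{i}(x_{k})|^{p}\leq n\,\Vert(x_{k})_{k}\Vert_{w,p}^{p},
\]
that is, $\pi_{p}(id_{E})\leq n^{1/p}$. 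The subadditivity is what makes this clean; the case $p=1$ records the endpoint bound $\pi_{1}(id_{E})\leq n$ that I will reuse below.

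The genuinely delicate range is $1<p<2$, and this is the main obstacle. Here the inclusion theorem points the wrong way, and the Auerbach estimate, now forced to use H\"{o}lder instead of subadditivity, degrades to the too-weak bound $\pi_{p}(id_{E})\leq n$. My plan is to recover the sharp exponent by interpolating between the two exact endpoints already in hand, $\pi_{1}(id_{E})\leq n$ and $\pi_{2}(id_{E})=\sqrt{n}$: writing $\tfrac1p=(1-\theta)+\tfrac{\theta}{2}$, i.e. $\theta=2-\tfrac2p\in(0,1)$, the interpolation inequality for the absolutely summing ideal gives
\[
\pi_{p}(id_{E})\leq \pi_{1}(id_{E})^{\,1-\theta}\,\pi_{2}(id_{E})^{\,\theta}\leq n^{1-\theta}\,(n^{1/2})^{\theta}=n^{1-\theta/2}=n^{1/p}.
\]
The step I expect to require the most care, and which I would isolate as a separate lemma, is precisely this interpolation inequality $\pi_{p}(u)\leq\pi_{p_{0}}(u)^{1-\theta}\pi_{p_{1}}(u)^{\theta}$. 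It is \emph{not} a formal consequence of the definition, because the weak summing norm on the input side also moves with the exponent; concretely, the difficulty is that the left-hand sums run over arbitrarily many vectors $N$, and one must show that this $N$-dependence is absorbed by the weak norm rather than by the dimension. I would justify it either by complex interpolation of the associated vector-valued sequence spaces $\ell^{w}_{p}(E)$ and $\ell_{p}(F)$ (whose interpolation identities carry the whole argument) or, as an alternative consistent with the tools highlighted in this paper, by a Szarek-type reduction to $n$ vectors in the spirit of Theorem~\ref{szarek}, after which the elementary $\ell_{p}$--$\ell_{2}$ comparison over $n$ terms combines with $\pi_{2}(id_{E})=\sqrt{n}$ to produce $n^{1/p}$ directly.
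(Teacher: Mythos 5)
Your two outer ranges are sound: for $p\geq 2$ you argue exactly as the paper does (inclusion theorem plus Pietsch's $\pi_{2}(id_{E})=\sqrt{n}$), and for $0<p\leq 1$ your Auerbach-plus-subadditivity argument is correct -- indeed it is the only part of your proposal that controls families of \emph{arbitrarily many} vectors, so in that range it even proves more than the paper's own computation does. The genuine gap is the range $1<p<2$, which is most of the interval and which you yourself flag as the crux: the inequality $\pi_{p}(u)\leq \pi_{1}(u)^{1-\theta}\pi_{2}(u)^{\theta}$ is asserted, not proved, and neither of your two suggested justifications goes through. The complex-interpolation route would require the containment $\ell_{p}^{w}(E)\subseteq \left[ \ell_{1}^{w}(E),\ell_{2}^{w}(E)\right] _{\theta}$, whereas interpolating the contractions $(x_{k})\mapsto (\varphi (x_{k}))$, $\varphi \in B_{E^{\ast }}$, yields only the opposite containment; equivalently, at the level of Pietsch domination, combining an $L_{1}(\mu _{1})$- and an $L_{2}(\mu _{2})$-domination bounds $\Vert x\Vert $ by a product of integrals which, by log-convexity of the $L_{s}$-norms, is \emph{larger} than any single $L_{p}$-average, so no $p$-summing domination can be extracted. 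The Szarek route fares no better: Theorem \ref{szarek} concerns $\pi _{q,2}$ with $q\geq 2$ only; the paper contains no few-vector reduction for $\pi _{p}$ with $p<2$, and such reductions are not a formality -- for the neighbouring norms $\pi _{q}=\pi _{q,q}$ with $q>2$ they are actually false (for $id_{\ell _{2}^{n}}$ one has $\pi _{q}^{(n)}(id_{\ell _{2}^{n}})\leq n^{1/q}$ while $\pi _{q}(id_{\ell _{2}^{n}})$ is of order $\sqrt{n}$ for fixed $q$). So the lemma carrying your proof remains unestablished.

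What you missed is that the paper needs none of this machinery: for the whole range $0<p<2$ it takes $x_{1},\dots ,x_{n}$ (as many vectors as $\dim E$), applies H\"{o}lder to pass from the $\ell _{p}$-sum to the $\ell _{2}$-sum at the cost of $n^{\frac{1}{p}-\frac{1}{2}}$, and finishes with $\pi _{2}(id_{E})=\sqrt{n}$ together with the monotonicity $\Vert (x_{j})\Vert _{w,2}\leq \Vert (x_{j})\Vert _{w,p}$, since $n^{\frac{1}{p}-\frac{1}{2}}\cdot n^{\frac{1}{2}}=n^{\frac{1}{p}}$; note that your second route, modulo the missing reduction, is precisely this computation. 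You are right that for $N>n$ vectors the H\"{o}lder factor becomes $N^{\frac{1}{p}-\frac{1}{2}}$, so this argument as written bounds the summing norm only when computed on at most $n$ vectors; but that restricted form is exactly what the paper proves and what it later uses (Propositions \ref{uniaomsomantenova}, \ref{p>q} and \ref{p<q} always apply the corollary to $n$ vectors spanning an at most $n$-dimensional subspace). In short, you correctly diagnosed the delicate point for the full summing norm -- the $N$-dependence -- but diagnosing it is not resolving it: your proof stops exactly at the lemma you postponed, while the paper's one-line H\"{o}lder argument settles everything it actually needs.
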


\begin{proof}
Let $0<p<2$ and $r>0$ such that $\frac{1}{p}=\frac{1}{2}+\frac{1}{r}.$ Thus,
given $x_{1},...,x_{n}\in E$ and using H\"{o}lder's Inequality we obtain

\begin{align*}
\left( \sum_{j=1}^{n}\left\Vert id_{E}(x_{j})\right\Vert ^{p}\right) ^{\frac{%
1}{p}}& \leq \left( \sum_{j=1}^{n}\left\Vert id_{E}(x_{j})\right\Vert
^{2}\right) ^{\frac{1}{2}}\cdot \left( \sum_{j=1}^{n}\left\vert 1\right\vert
^{r}\right) ^{\frac{1}{r}} \\
& \leq \pi _{2}(id_{E})\left\Vert (x_{j})_{j=1}^{n}\right\Vert _{w,2}n^{%
\frac{1}{r}} \\
& \overset{\text{Theorem}\ \ref{normaidentidade}}{\leq }n^{\frac{1}{2}+\frac{%
1}{r}}\left\Vert (x_{j})_{j=1}^{n}\right\Vert _{w,p}.
\end{align*}

Therefore
\begin{equation*}
\pi_{p}(id_{E}) \leq n^{\frac{1}{p}}.
\end{equation*}

For the case $p\geq 2$ we use Inclusion Theorem for absolutely $p$-summing
operators (see \cite[Theorem 2.8]{Diestel}) to obtain
\begin{equation*}
\pi _{p}(id_{E})\leq \pi _{2}(id_{E})=n^{\frac{1}{2}}.
\end{equation*}
\end{proof}

\begin{remark}
\label{remark} Of course that if $X$ is a subspace of an $n$-dimensional
normed space $E$, then
\begin{equation*}
\pi _{p}(id_{X})\leq \left( \dim X\right) ^{\max \left\lbrace \frac{1}{p},\frac{1}{2}%
\right\rbrace }\leq n^{\max \left\lbrace \frac{1}{p},\frac{1}{2}\right\rbrace }.
\end{equation*}
\end{remark}

Although multiple $\left( p,q\right) $-summing operators are defined for $%
p,q\geq 1$, the next result is also valid for $p,q>0.$

\begin{proposition}
\label{uniaomsomantenova} Let $0<p<\infty $ and $E_{1},...,E_{m},F$ be
Banach spaces. Then%
\begin{eqnarray*}
\mathbb{\eta }_{(p,p)}^{m-mult}\left( E_{1},...,E_{m};F\right) &\leq &\frac{m%
}{p}\text{ for }0<p\leq 2; \\
\mathbb{\eta }_{(p,p)}^{m-mult}\left( E_{1},...,E_{m};F\right) &\leq &\frac{m%
}{2}\text{ for }p\geq 2.
\end{eqnarray*}
\end{proposition}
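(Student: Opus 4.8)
The plan is to bound the left-hand side crudely by the operator norm of $T$, thereby decoupling the $m$ variables, and then to reduce each variable separately to the $p$-summing norm of the identity furnished by Corollary \ref{lemaidentidade}. Write $\alpha := \max\left\{\frac{1}{p},\frac{1}{2}\right\}$, so that the claimed bound is exactly $s_{m,p,p}=m\alpha$. Fix $T\in\mathcal{L}(E_{1},\dots,E_{m};F)$, an integer $n$, and vectors $x_{k_{i}}^{(i)}\in E_{i}$, and set $X_{i}:=\operatorname{span}\{x_{1}^{(i)},\dots,x_{n}^{(i)}\}\subseteq E_{i}$, so that $\dim X_{i}\leq n$ for each $i$.

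The first step uses only boundedness and $m$-linearity: since $\|T(x_{k_{1}}^{(1)},\dots,x_{k_{m}}^{(m)})\|\leq\|T\|\prod_{i=1}^{m}\|x_{k_{i}}^{(i)}\|$, raising to the power $p$ and summing over all indices, the $m$-fold sum factors into a product of $m$ single sums,
\[
\sum_{k_{1},\dots,k_{m}=1}^{n}\|T(x_{k_{1}}^{(1)},\dots,x_{k_{m}}^{(m)})\|^{p}\leq\|T\|^{p}\prod_{i=1}^{m}\left(\sum_{k_{i}=1}^{n}\|x_{k_{i}}^{(i)}\|^{p}\right).
\]
The second step treats each factor $\sum_{k_{i}}\|x_{k_{i}}^{(i)}\|^{p}=\sum_{k_{i}}\|id_{X_{i}}(x_{k_{i}}^{(i)})\|^{p}$ through the definition of the $p$-summing norm of $id_{X_{i}}$, bounding it by $\pi_{p}(id_{X_{i}})^{p}\,\|(x_{k_{i}}^{(i)})_{k_{i}=1}^{n}\|_{w,p}^{p}$, and then invoking Corollary \ref{lemaidentidade} (together with Remark \ref{remark}) to get $\pi_{p}(id_{X_{i}})\leq(\dim X_{i})^{\alpha}\leq n^{\alpha}$. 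Taking $p$-th roots and multiplying the $m$ estimates yields
\[
\left(\sum_{k_{1},\dots,k_{m}=1}^{n}\|T(x_{k_{1}}^{(1)},\dots,x_{k_{m}}^{(m)})\|^{p}\right)^{\frac{1}{p}}\leq\|T\|\,n^{m\alpha}\prod_{i=1}^{m}\|(x_{k_{i}}^{(i)})_{k_{i}=1}^{n}\|_{w,p},
\]
so that $s_{m,p,p}=m\alpha$ is admissible with $C=\|T\|$, giving the two stated values $\frac{m}{p}$ (for $0<p\leq 2$) and $\frac{m}{2}$ (for $p\geq 2$).

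I do not expect a genuine obstacle here; the crude product bound in the first step is enough for an \emph{upper} estimate on the index, precisely because summability in each variable is governed independently by the identity on its span. The one point that needs care is that the $p$-summing inequality for $id_{X_{i}}$ produces the weak norm computed over $B_{X_{i}^{\ast}}$, whereas the index is defined via the weak norm over $B_{E_{i}^{\ast}}$; these agree because, by the Hahn--Banach theorem, the restriction map $B_{E_{i}^{\ast}}\to B_{X_{i}^{\ast}}$ is onto and norm-nonincreasing, so the two suprema coincide on the vectors $x_{k_{i}}^{(i)}\in X_{i}$. Finally, the argument is insensitive to whether $p\geq 1$: for $0<p<1$ the quantity $\pi_{p}(id_{X_{i}})$ is only a quasi-norm, but the defining inequality still makes sense and Corollary \ref{lemaidentidade} is stated for all $0<p<\infty$, so nothing changes.
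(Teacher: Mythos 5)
Your proof is correct and takes essentially the same route as the paper's: both bound the multilinear sum by $\left\Vert T\right\Vert$ times a product of single sums, pass to the identity on the span $X_{i}$, control $\pi _{p}(id_{X_{i}})$ via Corollary \ref{lemaidentidade} (with Remark \ref{remark}), and use Hahn--Banach to replace the weak norm over $B_{X_{i}^{\ast }}$ by the one over $B_{E_{i}^{\ast }}$. The only cosmetic difference is your unified exponent $\max \left\{ \frac{1}{p},\frac{1}{2}\right\}$ in place of the paper's separate treatment of the cases $0<p\leq 2$ and $p\geq 2$.
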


\begin{proof}
Let $T\in \mathcal{L}(E_{1},...,E_{m};F),\;x_{k_{i}}^{(i)}\in E_{i}$ and $%
X_{i}=span\left\{ x_{1_{i}}^{(i)},...,x_{n_{i}}^{(i)}\right\} \subset E_{i}$
with $i=1,...,m$ and $k_{i}=1,...,n.$ Then

\begin{align*}
\left( \sum_{k_{1},...,k_{m}=1}^{n}\left\Vert T\left(
x_{k_{1}}^{(1)},...,x_{k_{m}}^{(m)}\right) \right\Vert ^{p}\right) ^{\frac{1%
}{p}}& \leq \left\Vert T\right\Vert \left( \sum_{k_{1}=1}^{n}\left\Vert
x_{k_{1}}^{(1)}\right\Vert ^{p}\right) ^{\frac{1}{p}}\cdots \left(
\sum_{k_{m}=1}^{n}\left\Vert x_{k_{m}}^{(m)}\right\Vert ^{p}\right) ^{\frac{1%
}{p}} \\
& =\left\Vert T\right\Vert \left( \sum_{k_{1}=1}^{n}\left\Vert
id_{X_{1}}\left( x_{k_{1}}^{(1)}\right) \right\Vert ^{p}\right) ^{\frac{1}{p}%
}\cdots \left( \sum_{k_{m}=1}^{n}\left\Vert id_{X_{m}}\left(
x_{k_{m}}^{(m)}\right) \right\Vert ^{p}\right) ^{\frac{1}{p}}.
\end{align*}

Since $id_{X_{i}}$ is absolutely $p$-summing, for each $i=1,...,m,$ we have

\begin{equation*}
\left( \sum_{k_{i}=1}^{n}\left\Vert
id_{X_{i}}\left(x_{k_{i}}^{(i)}\right)\right\Vert ^{p}\right) ^{\frac{1}{p}}
\leq \pi _{p}(id_{X_{i}})\sup_{\psi \in B_{X_{i}^{^{\ast }}}}\left(
\sum_{k_{i}=1}^{n}\left\vert \psi \left(x_{k_{i}}^{(i)}\right)\right\vert
^{p}\right) ^{\frac{1}{p}}.
\end{equation*}

By the Hahn--Banach Theorem, for each $\psi \in X_{i}^{\ast }$ there is an
extension $\bar{\psi}\in E_{i}^{\ast }$ such that $\left\Vert \psi
\right\Vert =\left\Vert \bar{\psi}\right\Vert .$ Thus

\begin{align*}
\left( \sum_{k_{i}=1}^{n}\left\Vert id_{X_{i}}\left( x_{k_{i}}^{(i)}\right)
\right\Vert ^{p}\right) ^{\frac{1}{p}}& \leq \pi _{p}(id_{X_{i}})\sup_{\bar{%
\psi}\in B_{E_{i}^{\ast }}}\left( \sum_{k_{i}=1}^{n}\left\vert \bar{\psi}%
\left( x_{k_{i}}^{(i)}\right) \right\vert ^{p}\right) ^{\frac{1}{p}} \\
& \leq \pi _{p}(id_{X_{i}})\sup_{\varphi \in B_{E_{i}^{\ast }}}\left(
\sum_{k_{i}=1}^{n}\left\vert \varphi \left( x_{k_{i}}^{(i)}\right)
\right\vert ^{p}\right) ^{\frac{1}{p}} \\
& =\pi _{p}(id_{X_{i}})\left\Vert \left( x_{k_{i}}^{(i)}\right)
_{k_{i}=1}^{n}\right\Vert _{w,p},
\end{align*}%
and hence

\begin{align*}
\left( \sum_{k_{1},...,k_{m}=1}^{n}\left\Vert T\left(
x_{k_{1}}^{(1)},...,x_{k_{m}}^{(m)}\right) \right\Vert ^{p}\right) ^{\frac{1%
}{p}}& \leq \left\Vert T\right\Vert \pi _{p}(id_{X_{1}})\left\Vert \left(
x_{k_{1}}^{(1)}\right) _{k_{1}=1}^{n}\right\Vert _{w,p}\cdots \pi
_{p}(id_{X_{m}})\left\Vert \left( x_{k_{m}}^{(m)}\right)
_{k_{m}=1}^{n}\right\Vert _{w,p} \\
& \leq \left\Vert T\right\Vert \prod_{i=1}^{m}\left( \pi
_{p}(id_{X_{i}})\left\Vert \left( x_{k_{i}}^{(i)}\right)
_{k_{i}=1}^{n}\right\Vert _{w,p}\right) .
\end{align*}

By the previous corollary, we have:

\item[1)] If $0<p\leq 2,$ then

\begin{align*}
\left( \sum_{k_{1},...,k_{m}=1}^{n}\left\Vert T\left(
x_{k_{1}}^{(1)},...,x_{k_{m}}^{(m)}\right) \right\Vert ^{p}\right) ^{\frac{1%
}{p}}& \overset{(\ref{lema})}{\leq }\left\Vert T\right\Vert \left( n^{\frac{1%
}{p}}\right) ^{m}\prod_{i=1}^{m}\left\Vert \left( x_{k_{i}}^{(i)}\right)
_{k_{i}=1}^{n}\right\Vert _{w,p} \\
& =\left\Vert T\right\Vert n^{\frac{m}{p}}\prod_{i=1}^{m}\left\Vert \left(
x_{k_{i}}^{(i)}\right) _{k_{i}=1}^{n}\right\Vert _{w,p},
\end{align*}%
and

\begin{equation*}
\mathbb{\eta }_{(p,p)}^{m-mult}\left( E_{1}, \cdots , E_{m};F\right) \leq
\frac{m}{p}.
\end{equation*}

\item[2)] If $p\geq 2,$ then, analogously,

\begin{equation*}
\mathbb{\eta }_{(p,p)}^{m-mult}\left( E_{1}, \cdots , E_{m};F\right) \leq
\frac{m}{2}.
\end{equation*}
\end{proof}

The next result shows that the above estimates can not be improved, keeping
its universality.

\begin{corollary}
$\mathbb{\eta }_{(2,2)}^{m-mult}\left( \ell _{2};c_{0}\right) = \frac{m}{2}$.
\end{corollary}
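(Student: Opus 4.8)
The plan is to pair the upper bound already supplied by Proposition~\ref{uniaomsomantenova} with a matching lower bound coming from a single explicit operator. Since here $p=q=2\geq 2$, Proposition~\ref{uniaomsomantenova} gives at once $\mathbb{\eta}_{(2,2)}^{m-mult}(\ell_2;c_0)\leq \tfrac{m}{2}$, so the entire content is to prove the reverse inequality $\mathbb{\eta}_{(2,2)}^{m-mult}(\ell_2;c_0)\geq \tfrac{m}{2}$. Because the index is defined as an infimum over admissible exponents $s$, it suffices to exhibit one bounded $m$-linear operator $T$ for which no $s<\tfrac{m}{2}$ can be accompanied by a finite constant; equivalently, to produce, for each $n$, test vectors forcing the quotient of the two sides of the defining inequality to grow like $n^{m/2}$.

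First I would reindex the target, writing $c_0=c_0(\mathbb{N}^m)$ (an isometric identification, as $\mathbb{N}^m$ is countably infinite), and define $T:\ell_2\times\cdots\times\ell_2\to c_0$ coordinatewise by
\[
T\big(x^{(1)},\dots,x^{(m)}\big)=\Big(\langle x^{(1)},e_{k_1}\rangle\cdots \langle x^{(m)},e_{k_m}\rangle\Big)_{(k_1,\dots,k_m)\in\mathbb{N}^m},
\]
where $\{e_j\}_j$ is the canonical basis of $\ell_2$ and $\langle x,e_j\rangle$ is the $j$-th coordinate. Multilinearity is immediate. I would then verify the two analytic points: that the output sequence actually lies in $c_0(\mathbb{N}^m)$ and that $T$ is bounded. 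For membership, if $\big|\langle x^{(1)},e_{k_1}\rangle\cdots\langle x^{(m)},e_{k_m}\rangle\big|>\varepsilon$ then each factor must exceed $\varepsilon/\prod_{j\neq i}\|x^{(j)}\|_2$, which occurs for only finitely many values of each $k_i$ since $(\langle x^{(i)},e_k\rangle)_k\in\ell_2\subset c_0$; hence only finitely many coordinates exceed $\varepsilon$. For the norm, $\big\|T(x^{(1)},\dots,x^{(m)})\big\|_\infty=\prod_{i=1}^m\sup_{k_i}\big|\langle x^{(i)},e_{k_i}\rangle\big|\leq\prod_{i=1}^m\|x^{(i)}\|_2$, so $\|T\|\leq 1$.

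With $T$ in hand I would test it on the canonical basis, taking the vector $x^{(i)}_{k_i}=e_{k_i}$ with $k_i\in\{1,\dots,n\}$ in every slot $i$. Then $T(e_{k_1},\dots,e_{k_m})$ is precisely the canonical basis vector of $c_0(\mathbb{N}^m)$ at position $(k_1,\dots,k_m)$, so it has norm $1$ for each of the $n^m$ tuples; hence the left-hand side of the defining inequality equals $(n^m)^{1/2}=n^{m/2}$. On the other side, self-duality of $\ell_2$ yields $\big\|(e_{k_i})_{k_i=1}^n\big\|_{w,2}=1$, so every factor of the product is $1$. The defining inequality thus collapses to $n^{m/2}\leq Cn^{s}$ for all $n$, which forces $s\geq \tfrac{m}{2}$ for any admissible $C$. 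Therefore $\tfrac{m}{2}$ is a lower bound for the index, and together with the upper bound the equality $\mathbb{\eta}_{(2,2)}^{m-mult}(\ell_2;c_0)=\tfrac{m}{2}$ follows.

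The main obstacle I anticipate is not any delicate estimate but the design of $T$ itself. The diagonal evaluations $T(e_{k_1},\dots,e_{k_m})$ must all stay bounded below in norm while $\|T\|$ remains bounded, and a naive \emph{single-index} pointwise product $\big(\langle x^{(1)},e_{k}\rangle\cdots\langle x^{(m)},e_{k}\rangle\big)_k$ collapses all off-diagonal contributions and only yields the exponent $\tfrac{1}{2}$. Placing each of the $n^m$ products in its own coordinate of $c_0(\mathbb{N}^m)$ is exactly what prevents cancellation and upgrades $\tfrac{1}{2}$ to $\tfrac{m}{2}$; the one point that genuinely needs a short argument is checking that this tensor-type construction still lands in $c_0$ rather than merely in $\ell_\infty$.
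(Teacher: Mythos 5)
Your proposal is correct and follows essentially the same route as the paper: the upper bound is quoted from Proposition~\ref{uniaomsomantenova}, and the lower bound comes from the coordinatewise tensor-product operator $T\big(x^{(1)},\dots,x^{(m)}\big)=\big(x^{(1)}_{k_1}\cdots x^{(m)}_{k_m}\big)$ into $c_0$, evaluated on the canonical basis vectors, whose weak $\ell_2$-norm equals $1$, yielding $n^{m/2}\leq Cn^{s}$ and hence $s\geq \tfrac{m}{2}$. The only real difference is one of rigor, in your favor: the paper's operator is formally truncated at the index $n$ and thus depends on $n$, which sits awkwardly with the fact that the constant $C$ in the definition of the index is allowed to depend on $T$; your single $n$-independent operator on $c_0(\mathbb{N}^m)\cong c_0$, together with the verification that its values genuinely lie in $c_0$ and not merely in $\ell_\infty$, removes that subtlety.
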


\begin{proof}
Let $t$ be a positive real number such that for each $T\in \mathcal{L}%
(^{m}\ell _{2};c_{0})$ there is a constant $C\geq 0$ such that
\begin{equation}
\left( \sum_{k_{1},...,k_{m}=1}^{n}\left\Vert T\left(
x_{k_{1}}^{(1)},...,x_{k_{m}}^{(m)}\right) \right\Vert ^{2}\right) ^{\frac{1%
}{2}}\leq Cn^{t}\prod_{i=1}^{m}\left\Vert \left( x_{k_{i}}^{(i)}\right)
_{k_{i}=1}^{n}\right\Vert _{w,2}  \label{joma}
\end{equation}%
for all positive integers $n$ and all $x_{k_{i}}^{(i)}\in \ell _{2}$, with $%
1\leq k_{i}\leq n$.

Now, let $T\in \mathcal{L}(^{m}\ell _{2};c_{0})$ be defined by
\begin{equation*}
T\left( x^{(1)},...,x^{(m)}\right) =\left( x_{j_{1}}^{(1)}\cdots
x_{j_{m}}^{(m)}\right) _{j_{1},...,j_{m}=1}^{n}.
\end{equation*}%
Of course $\left\Vert T\right\Vert =1$ and
\begin{equation*}
\left( \sum_{j_{1},...,j_{m}=1}^{n}\left\Vert
T(e_{j_{1}},...,e_{j_{m}})\right\Vert ^{2}\right) ^{\frac{1}{2}}=n^{\frac{m}{%
2}}.
\end{equation*}%
Since $\left\Vert (e_{j_{i}})_{j_{i}=1}^{n}\right\Vert _{w,2}=1,$ the latter
condition together with (\ref{joma}) imply
\begin{equation*}
n^{\frac{m}{2}}\leq Cn^{t}
\end{equation*}%
and thus $t\geq \frac{m}{2}$. The converse inequality is given by the
previous proposition and the proof is done.
\end{proof}


\bigskip If $q<p$ it is plain that
\begin{equation*}
\mathbb{\eta }_{(p,q)}^{m-mult}\left( E_{1},...,E_{m};F\right) \leq \mathbb{%
\eta }_{(p,p)}^{m-mult}\left( E_{1},...,E_{m};F\right) .
\end{equation*}%
The next results provide better estimates.\bigskip

\begin{proposition}
\label{p>q} Let $1\leq q\leq p<\infty $ and $E_{1},...,E_{m},F$ be Banach
spaces. Then%
\begin{eqnarray*}
\mathbb{\eta }_{(p,q)}^{m-mult}\left( E_{1},...,E_{m};F\right) &\leq &\frac{m%
}{p}\text{ for }1\leq q\leq 2; \\
\mathbb{\eta }_{(p,q)}^{m-mult}\left( E_{1},...,E_{m};F\right) &\leq &\frac{%
mq}{2p}\text{ for }q\geq 2.
\end{eqnarray*}
\end{proposition}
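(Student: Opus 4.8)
The plan is to mimic the reduction carried out in Proposition \ref{uniaomsomantenova} and then replace the crude estimate used there by a sharper one obtained through interpolation. First I would bound, for fixed vectors $x_{k_i}^{(i)}$,
\[
\left(\sum_{k_1,\dots,k_m=1}^n\|T(x_{k_1}^{(1)},\dots,x_{k_m}^{(m)})\|^p\right)^{1/p}\le\|T\|\prod_{i=1}^m\left(\sum_{k_i=1}^n\|x_{k_i}^{(i)}\|^p\right)^{1/p},
\]
and then write $\|x_{k_i}^{(i)}\|=\|id_{X_i}(x_{k_i}^{(i)})\|$, where $X_i=\mathrm{span}\{x_1^{(i)},\dots,x_n^{(i)}\}$ has $\dim X_i\le n$. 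Exactly as in the previous proof, the Hahn--Banach theorem lets me compute the relevant weak norm over $B_{E_i^{\ast}}$ rather than $B_{X_i^{\ast}}$. Thus everything reduces to a single per-slot estimate: for a space $X$ with $\dim X\le n$ and $x_1,\dots,x_n\in X$,
\[
\left(\sum_{k=1}^n\|x_k\|^p\right)^{1/p}\le n^{\alpha}\,\|(x_k)_{k=1}^n\|_{w,q},\qquad \alpha=\begin{cases}1/p,& 1\le q\le 2,\\ q/(2p),& q\ge 2.\end{cases}
\]
Multiplying the $m$ resulting inequalities produces the factor $n^{m\alpha}$ and hence the claimed bounds on $\mathbb{\eta}_{(p,q)}^{m-mult}$.

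The heart of the argument is this per-slot estimate, which I would obtain by interpolating the $\ell_p$-norm of the scalar sequence $(\|x_k\|)_k$ between an $\ell_r$-endpoint ($r\in\{q,2\}$) and the trivial $\ell_\infty$-endpoint. The two endpoint estimates are: (i) $\max_k\|x_k\|\le\|(x_k)\|_{w,q}$, immediate from the definition of the weak norm; and (ii) an $\ell_r$-estimate from Corollary \ref{lemaidentidade}, namely $\big(\sum_k\|x_k\|^r\big)^{1/r}\le\pi_r(id_X)\|(x_k)\|_{w,r}\le(\dim X)^{\max\{1/r,1/2\}}\|(x_k)\|_{w,r}$. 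The elementary inequality $\|a\|_p\le\|a\|_r^{r/p}\|a\|_\infty^{(p-r)/p}$ (valid for $r\le p$) then combines them.

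Concretely I would split into three cases. If $2\le q\le p$, take $r=q$: here $\pi_q(id_X)\le(\dim X)^{1/2}\le n^{1/2}$, and interpolating between $\ell_q$ and $\ell_\infty$ gives the exponent $\tfrac12\cdot\tfrac{q}{p}=\tfrac{q}{2p}$, while the two occurrences of $\|(x_k)\|_{w,q}$ recombine to the first power. If $q\le 2\le p$, take $r=2$: use $\pi_2(id_X)=\sqrt{\dim X}\le n^{1/2}$ (Theorem \ref{normaidentidade}) together with $\|(x_k)\|_{w,2}\le\|(x_k)\|_{w,q}$ (weak norms decrease in the index), and interpolation between $\ell_2$ and $\ell_\infty$ yields the exponent $\tfrac12\cdot\tfrac2p=\tfrac1p$. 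If $q\le p\le 2$, no interpolation is needed: directly $\big(\sum_k\|x_k\|^p\big)^{1/p}\le\pi_p(id_X)\|(x_k)\|_{w,p}\le(\dim X)^{1/p}\|(x_k)\|_{w,q}\le n^{1/p}\|(x_k)\|_{w,q}$. These three cases exhaust $1\le q\le p<\infty$.

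The step I expect to be the main obstacle is obtaining the sharp exponent $q/(2p)$ in the range $q\ge 2$: the naive route, bounding the $p$-sum by the $q$-sum (or the $2$-sum) and applying Corollary \ref{lemaidentidade}, only yields the weaker exponent $1/2$, and it is precisely the interpolation against the trivial $\ell_\infty$ endpoint that converts the $q/p$ fraction of the summing estimate into the gain from $q<p$. Checking that the powers of $\|(x_k)\|_{w,q}$ indeed sum to $1$, so that the final inequality is homogeneous of the correct degree, is the one computation I would carry out carefully.
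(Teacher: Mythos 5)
Your proposal is correct and takes essentially the same route as the paper: the same reduction to a per-slot estimate for $id_{X_i}$ on the spans $X_i$ (with the same Hahn--Banach passage from $B_{X_i^{\ast}}$ to $B_{E_i^{\ast}}$) followed by Corollary \ref{lemaidentidade}, the only difference being that your $\ell_r$--$\ell_\infty$ interpolation step is precisely the standard proof of the inclusion estimate $\pi_{p,q}(id_{X_i})\leq \pi_{q}(id_{X_i})^{q/p}$, which the paper instead quotes from \cite[Corollary 16.3.1]{Garling}. Your exponents and the homogeneity in $\left\Vert (x_k)_{k=1}^{n}\right\Vert _{w,q}$ check out in all three cases, so the argument is complete.
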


\begin{proof}
Note that

\begin{equation*}
\left( \sum_{k_{1},...,k_{m}=1}^{n}\left\Vert T\left(
x_{k_{1}}^{(1)},...,x_{k_{m}}^{(m)}\right) \right\Vert ^{p}\right) ^{\frac{1%
}{p}}\leq \left\Vert T\right\Vert \left( \sum_{k_{1}=1}^{n}\left\Vert
x_{k_{1}}^{(1)}\right\Vert ^{p}\right) ^{\frac{1}{p}}\cdots \left(
\sum_{k_{m}=1}^{n}\left\Vert x_{k_{m}}^{(m)}\right\Vert ^{p}\right) ^{\frac{1%
}{p}}.
\end{equation*}

Let $X_{i}:=span\left\{ x_{1_{i}}^{(i)},...,x_{n_{i}}^{(i)}\right\} \subset
E_{i}$ with $i=1,...,m.$ Since $X_{i}$ is a finite dimensional Banach space it follows that $id_{X_{i}}$ is absolutely $q$-summing. So, by \cite[Corollary 16.3.1]{Garling} we have

\begin{equation}  \label{idpq}
\pi_{p,q}(id_{X_{i}}) \leq \pi_{q}(id_{X_{i}})^{\frac{q}{p}}.
\end{equation}

Thus, for each $i=1,...,m,$ we obtain

\begin{equation*} 
\left( \sum_{k_{i}=1}^{n}\left\Vert x_{k_{i}}^{(i)}\right\Vert ^{p}\right) ^{%
\frac{1}{p}} \leq \pi_{p,q}(id_{X_{i}}) \left\Vert (x_{k_{i}})_{k_{i}=1}^{n}
\right\Vert_{w,q} \overset{(\ref{idpq})}{\leq} \pi_{q}(id_{X_{i}})^{\frac{q}{%
p}} \left\Vert (x_{k_{i}})_{k_{i}=1}^{n} \right\Vert_{w,q}
\end{equation*}

and, for $q\geq 2,$ we have

\begin{equation*}
\left( \sum_{k_{i}=1}^{n}\left\Vert x_{k_{i}}^{(i)}\right\Vert ^{p}\right) ^{%
\frac{1}{p}}\leq \left( n^{\frac{1}{2}}\right) ^{\frac{q}{p}}\left\Vert
(x_{k_{i}})_{k_{i}=1}^{n}\right\Vert _{w,q}=n^{\frac{q}{2p}}\left\Vert
(x_{k_{i}})_{k_{i}=1}^{n}\right\Vert _{w,q}.
\end{equation*}

Therefore%
\begin{equation*}
\mathbb{\eta }_{(p,q)}^{m-mult}\left( E_{1},...,E_{m};F\right) \leq \frac{mq%
}{2p}.
\end{equation*}

Analogously, when $1\leq q\leq 2$ we conclude that

\begin{equation*}
\left( \sum_{k_{1},...,k_{m}=1}^{n}\left\Vert T\left(
x_{k_{1}}^{(1)},...,x_{k_{m}}^{(m)}\right) \right\Vert ^{p}\right) ^{\frac{1%
}{p}}=\left\Vert T\right\Vert n^{\frac{m}{p}}\prod_{i=1}^{m}\left\Vert
\left( x_{k_{i}}^{(i)}\right) _{k_{i}=1}^{n}\right\Vert _{w,q}
\end{equation*}%
and%
\begin{equation*}
\mathbb{\eta }_{(p,q)}^{m-mult}\left( E_{1},...,E_{m};F\right) \leq \frac{m}{%
p}.
\end{equation*}
\end{proof}


\bigskip It is well-known that the notion of multiple $\left( p;q\right) $%
-summing operators has no sense when $p<q$, because just the null map would
satisfy the definition. But, curiously, in our context it makes sense to
extrapolate the definition to $0<p<q.$

\begin{proposition}
\label{p<q} Let $0<p<q<\infty $ and $E_{1},...,E_{m},F$ be Banach spaces.
Then%
\begin{eqnarray*}
\mathbb{\eta }_{(p,q)}^{m-mult}\left( E_{1},...,E_{m};F\right) &\leq &\frac{m%
}{p}\text{ for }0<q\leq 2; \\
\mathbb{\eta }_{(p,q)}^{m-mult}\left( E_{1},...,E_{m};F\right) &\leq &\frac{%
(qp-2p+2q)m}{2qp}\text{ for }q\geq 2.
\end{eqnarray*}
\end{proposition}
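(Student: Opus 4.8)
The plan is to follow the same scheme as in Propositions~\ref{uniaomsomantenova} and~\ref{p>q}, the only genuinely new feature being that, since now $p<q$, the inclusion-type estimate $\pi_{p,q}(id_{X_i})\leq \pi_q(id_{X_i})^{q/p}$ used in Proposition~\ref{p>q} is no longer available (it requires $q\leq p$). First I would bound the left-hand side by the operator norm times a product of one-variable $\ell_p$-sums,
\begin{equation*}
\left( \sum_{k_{1},...,k_{m}=1}^{n}\left\Vert T\left( x_{k_{1}}^{(1)},...,x_{k_{m}}^{(m)}\right) \right\Vert ^{p}\right) ^{\frac{1}{p}}\leq \left\Vert T\right\Vert \prod_{i=1}^{m}\left( \sum_{k_{i}=1}^{n}\left\Vert x_{k_{i}}^{(i)}\right\Vert ^{p}\right) ^{\frac{1}{p}},
\end{equation*}
and set $X_{i}:=span\{x_{1_{i}}^{(i)},...,x_{n_{i}}^{(i)}\}\subset E_i$ for each $i$.

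The heart of the argument is to estimate each factor $(\sum_{k_i}\|x_{k_i}^{(i)}\|^p)^{1/p}$ by the weak-$q$ norm. Since $p<q$, I would apply H\"{o}lder's inequality with the conjugate exponents $q/p$ and $q/(q-p)$ to obtain the (necessarily $n$-dependent) comparison
\begin{equation*}
\left( \sum_{k_{i}=1}^{n}\left\Vert x_{k_{i}}^{(i)}\right\Vert ^{p}\right) ^{\frac{1}{p}}\leq n^{\frac{1}{p}-\frac{1}{q}}\left( \sum_{k_{i}=1}^{n}\left\Vert x_{k_{i}}^{(i)}\right\Vert ^{q}\right) ^{\frac{1}{q}}.
\end{equation*}
Then, exactly as in the proof of Proposition~\ref{uniaomsomantenova}, I would use that $id_{X_i}$ is absolutely $q$-summing together with the Hahn--Banach extension of functionals from $X_i^{\ast}$ to $E_i^{\ast}$ to get the bound $(\sum_{k_i}\|x_{k_i}^{(i)}\|^q)^{1/q}\leq \pi_q(id_{X_i})\,\|(x_{k_i}^{(i)})_{k_i=1}^n\|_{w,q}$, and finally invoke Corollary~\ref{lemaidentidade} together with Remark~\ref{remark} to bound $\pi_q(id_{X_i})\leq n^{\max\{1/q,\,1/2\}}$.

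It then remains only to collect the powers of $n$ and split into the two stated regimes. When $0<q\leq 2$ the exponent is $1/q$, so the two factors $n^{1/p-1/q}$ and $n^{1/q}$ combine to $n^{1/p}$ per variable, yielding $n^{m/p}$ and hence $\eta_{(p,q)}^{m-mult}\leq m/p$. When $q\geq 2$ the exponent is $1/2$, giving per variable $n^{\frac{1}{p}-\frac{1}{q}+\frac{1}{2}}$; taking the product over $i=1,\dots,m$ and simplifying $\frac{1}{p}-\frac{1}{q}+\frac{1}{2}=\frac{qp-2p+2q}{2qp}$ produces the factor $n^{\frac{(qp-2p+2q)m}{2qp}}$, which is precisely the claimed bound. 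I do not expect a serious obstacle: the only point requiring care is recognizing that, in contrast with Proposition~\ref{p>q}, the correct device is to pay the H\"{o}lder price $n^{1/p-1/q}$ and then estimate a genuine $q$-summing norm, rather than raising $\pi_q(id_{X_i})$ to the power $q/p$, and then checking that the arithmetic of the exponents reproduces $\frac{(qp-2p+2q)m}{2qp}$.
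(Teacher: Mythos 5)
Your proposal is correct and is essentially the paper's own proof: the same initial factorization through $\left\Vert T\right\Vert$ and the spans $X_i$, the same H\"{o}lder step paying $n^{\frac{1}{p}-\frac{1}{q}}=n^{\frac{q-p}{pq}}$ per variable, and the same bound $\pi_{q}(id_{X_{i}})\leq n^{\max\left\lbrace \frac{1}{q},\frac{1}{2}\right\rbrace }$ from Corollary~\ref{lemaidentidade}, with identical exponent arithmetic in both regimes. Your observation that the inclusion-type estimate of Proposition~\ref{p>q} is unavailable when $p<q$ and must be replaced by the H\"{o}lder price is exactly the point of the paper's argument as well.
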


\begin{proof}
Note that

\begin{equation*}
\left( \sum_{k_{1},...,k_{m}=1}^{n}\left\Vert T\left(
x_{k_{1}}^{(1)},...,x_{k_{m}}^{(m)}\right) \right\Vert ^{p}\right) ^{\frac{1%
}{p}}\leq \left\Vert T\right\Vert \left( \sum_{k_{1}=1}^{n}\left\Vert
x_{k_{1}}^{(1)}\right\Vert ^{p}\right) ^{\frac{1}{p}}...\left(
\sum_{k_{m}=1}^{n}\left\Vert x_{k_{m}}^{(m)}\right\Vert ^{p}\right) ^{\frac{1%
}{p}}.
\end{equation*}%
For all $i=1,...,m,$ the H\"{o}lder inequality tells us that

\begin{align*}
\left( \sum_{k_{i}=1}^{n}\left\Vert x_{k_{i}}^{(i)}\right\Vert ^{p}\right) ^{%
\frac{1}{p}}& \leq \left( \sum_{k_{i}=1}^{n}\left\Vert
x_{k_{i}}^{(i)}\right\Vert ^{q}\right) ^{\frac{1}{q}}\left(
\sum_{k_{m}=1}^{n}\left\vert 1\right\vert ^{\frac{pq}{q-p}}\right) ^{\frac{%
q-p}{pq}} \\
& \leq \left( \sum_{k_{i}=1}^{n}\left\Vert x_{k_{i}}^{(i)}\right\Vert
^{q}\right) ^{\frac{1}{q}}n^{\frac{q-p}{qp}}.
\end{align*}

Hence, for $q\geq 2,$ we have

\begin{align*}
\left( \sum_{k_{1},...,k_{m}=1}^{n}\left\Vert T\left(
x_{k_{1}}^{(1)},...,x_{k_{m}}^{(m)}\right) \right\Vert ^{p}\right) ^{\frac{1%
}{p}}& \overset{(\ref{lema})}{\leq }\left\Vert T\right\Vert \left( n^{\frac{1%
}{2}}\left\Vert \left( x_{k_{1}}^{(1)}\right) _{k_{1}=1}^{n}\right\Vert
_{w,q}n^{\frac{q-p}{qp}}\right) \cdots \left( n^{\frac{1}{2}}\left\Vert
\left( x_{k_{m}}^{(m)}\right) _{k_{m}=1}^{n}\right\Vert _{w,q}n^{\frac{q-p}{%
qp}}\right) \\
& =\left\Vert T\right\Vert n^{\frac{(qp-2p+2q)m}{2qp}}\prod_{i=1}^{m}\left%
\Vert \left( x_{k_{i}}^{(i)}\right) _{k_{i}=1}^{n}\right\Vert _{w,q}
\end{align*}

and, for $0<q\leq 2,$ we get

\begin{align*}
\left( \sum_{k_{1},...,k_{m}=1}^{n}\left\Vert T\left(
x_{k_{1}}^{(1)},...,x_{k_{m}}^{(m)}\right) \right\Vert ^{p}\right) ^{\frac{1%
}{p}}& \overset{(\ref{lema})}{\leq }\left\Vert T\right\Vert \left( n^{\frac{1%
}{q}}\left\Vert \left( x_{k_{1}}^{(1)}\right) _{k_{1}=1}^{n}\right\Vert
_{w,q}n^{\frac{q-p}{qp}}\right) \cdots \left( n^{\frac{1}{q}}\left\Vert
\left( x_{k_{m}}^{(m)}\right) _{k_{m}=1}^{n}\right\Vert _{w,q}n^{\frac{q-p}{%
qp}}\right) \\
& =\left\Vert T\right\Vert n^{\frac{m}{p}}\prod_{i=1}^{m}\left\Vert \left(
x_{k_{i}}^{(i)}\right) _{k_{i}=1}^{n}\right\Vert _{w,q}.
\end{align*}
\end{proof}

\bigskip It is plain that the polynomial $m$-index of $\left( p,q\right) $%
-summability can be estimated using the estimates for the multilinear $m$%
-index of $\left( p,q\right) $-summability. Below we present more accurate
estimates.


\begin{proposition}
\label{pol} Let $E,F$ be Banach spaces, $m$ be a natural number, $q>0$ and $%
p<\frac{q}{m}$. Then
\begin{eqnarray*}
\mathbb{\eta }_{(p,q)}^{m-pol}\left( E;F\right) &\leq &\frac{1}{p}\text{ for
}0<q\leq 2; \\
\mathbb{\eta }_{(p,q)}^{m-pol}\left( E;F\right) &\leq &\frac{1}{p}+\frac{%
m(q-2)}{2q}\text{ for }q\geq 2.
\end{eqnarray*}
\end{proposition}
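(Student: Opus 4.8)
The plan is to reduce everything to $m$-homogeneity and then control the resulting $\ell_{mp}$-sum of norms by a weak-$\ell_q$ quantity, treating the single exponent $mp$ the way the product exponents were treated in the multilinear propositions. First I would fix $P\in\mathcal{P}(^mE;F)$, vectors $x_1,\dots,x_n\in E$, and set $X=\mathrm{span}\{x_1,\dots,x_n\}$, so that $\dim X\le n$ and $id_X$ is absolutely $q$-summing. Applying $\|P(x_j)\|\le\|P\|\,\|x_j\|^m$ termwise gives
\[
\left(\sum_{j=1}^n \|P(x_j)\|^p\right)^{1/p}\le \|P\|\left(\sum_{j=1}^n \|x_j\|^{mp}\right)^{1/p}=\|P\|\left[\left(\sum_{j=1}^n \|id_X(x_j)\|^{mp}\right)^{1/(mp)}\right]^m,
\]
so it suffices to bound the bracketed quantity by $n^{\alpha}\|(x_j)_{j=1}^n\|_{w,q}$ for the appropriate exponent $\alpha$ and then raise to the $m$-th power.

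The key step uses the hypothesis $p<q/m$, i.e. $mp<q$, to pass from the $mp$-sum to a $q$-sum by H\"older's inequality with the conjugate pair $q/(mp)>1$ and its dual, which yields
\[
\left(\sum_{j=1}^n \|x_j\|^{mp}\right)^{1/(mp)}\le n^{\frac{1}{mp}-\frac{1}{q}}\left(\sum_{j=1}^n \|id_X(x_j)\|^{q}\right)^{1/q}.
\]
Then I would invoke that $id_X$ is absolutely $q$-summing together with Corollary \ref{lemaidentidade}, which gives $\pi_q(id_X)\le(\dim X)^{\max\{1/q,1/2\}}\le n^{\max\{1/q,1/2\}}$, to obtain
\[
\left(\sum_{j=1}^n \|id_X(x_j)\|^{q}\right)^{1/q}\le \pi_q(id_X)\,\sup_{\psi\in B_{X^*}}\left(\sum_{j=1}^n |\psi(x_j)|^q\right)^{1/q}.
\]
Exactly as in Proposition \ref{uniaomsomantenova}, a Hahn--Banach extension of each $\psi\in X^*$ to some $\bar\psi\in E^*$ of equal norm replaces the supremum over $B_{X^*}$ by the supremum over $B_{E^*}$, turning the right-hand side into $\pi_q(id_X)\,\|(x_j)_{j=1}^n\|_{w,q}$.

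Combining the displays, the $mp$-sum is at most $n^{\frac{1}{mp}-\frac{1}{q}+\max\{1/q,1/2\}}\|(x_j)_{j=1}^n\|_{w,q}$. For $0<q\le 2$ the maximum equals $1/q$ and the exponent collapses to $\frac{1}{mp}$; raising to the $m$-th power produces the factor $n^{1/p}$ and hence $\eta_{(p,q)}^{m-pol}(E;F)\le\frac1p$. For $q\ge 2$ the maximum equals $1/2$, the exponent becomes $\frac{1}{mp}-\frac1q+\frac12$, and after multiplying by $m$ it simplifies to $\frac1p+\frac{m(q-2)}{2q}$, as claimed; note the admissible constant is $C=\|P\|$, depending only on $P$ as required. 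I do not expect a genuine obstacle here beyond bookkeeping of exponents: the two points meriting care are verifying that $mp<q$ legitimizes the H\"older step (it does, by hypothesis) and confirming that the $q$-summing machinery and the weak-$\ell_q$ quantity may be used as formal expressions in the extrapolated range $q>0$, precisely as they are in Corollary \ref{lemaidentidade} and Proposition \ref{uniaomsomantenova}.
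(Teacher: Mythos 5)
Your proof is correct and follows essentially the same route as the paper's: bound $\|P(x_j)\|\le\|P\|\,\|x_j\|^m$, use H\"older with exponent $q/(mp)>1$ to pass from the $mp$-sum to the $q$-sum at the cost of $n^{\frac{q-mp}{qmp}}$ per coordinate, and then apply Corollary \ref{lemaidentidade} (with the Hahn--Banach extension step, which the paper leaves implicit here) to get the factor $n^{\max\{1/q,1/2\}}$, yielding exactly the two claimed exponents. The only differences are cosmetic (you work with the $1/(mp)$-th root and raise to the $m$-th power at the end, rather than carrying the $1/p$ power throughout), so there is nothing to correct.
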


\begin{proof}
For any $P\in \mathcal{P}\left( ^{m}E;F\right) ,$ by virtue of the H\"{o}%
lder inequality we have
\begin{align*}
\left( \sum\limits_{k=1}^{n}\left\Vert P\left( x_{k}\right) \right\Vert
^{p}\right) ^{\frac{1}{p}}& \leq \left\Vert P\right\Vert \left(
\sum_{k=1}^{n}\left\Vert x_{k}\right\Vert ^{mp}\right) ^{\frac{1}{p}} \\
& \leq \left\Vert P\right\Vert \left[ \left( \sum_{k=1}^{n}\left( \left\Vert
x_{k}\right\Vert ^{mp}\right) ^{\frac{q}{mp}}\right) ^{\frac{mp}{q}}\left(
\sum_{k=1}^{n}\left\vert 1\right\vert ^{\left( \frac{q}{mp}\right) ^{\ast
}}\right) ^{\frac{1}{\left( \frac{q}{mp}\right) ^{\ast }}}\right] ^{\frac{1}{%
p}} \\
& =\left\Vert P\right\Vert \left( \sum_{k=1}^{n}\left\Vert x_{k}\right\Vert
^{q}\right) ^{\frac{m}{q}}n^{\frac{q-mp}{qp}}.
\end{align*}%
Hence, for $0<q\leq 2,$ we have
\begin{align*}
\left( \sum\limits_{k=1}^{n}\left\Vert P\left( x_{k}\right) \right\Vert
^{p}\right) ^{\frac{1}{p}}& \leq \left\Vert P\right\Vert n^{\frac{m}{q}}n^{%
\frac{q-mp}{qp}}\left\Vert (x_{k})_{k=1}^{n}\right\Vert _{w,q}^{m} \\
& =\left\Vert P\right\Vert n^{\frac{1}{p}}\left\Vert
(x_{k})_{k=1}^{n}\right\Vert _{w,q}^{m},
\end{align*}%
and
\begin{equation*}
\mathbb{\eta }_{(p,q)}^{m-pol}\left( E;F\right) \leq \frac{1}{p}.
\end{equation*}%
For $q\geq 2$ we obtain
\begin{align*}
\left( \sum\limits_{k=1}^{n}\left\Vert P\left( x_{k}\right) \right\Vert
^{p}\right) ^{\frac{1}{p}}& \leq \left\Vert P\right\Vert n^{\frac{m}{2}}n^{%
\frac{q-mp}{qp}}\left\Vert (x_{k})_{k=1}^{n}\right\Vert _{w,q}^{m} \\
& =\left\Vert P\right\Vert n^{\frac{mpq+2q-2pm}{2pq}}\left\Vert
(x_{k})_{k=1}^{n}\right\Vert _{w,q}^{m}
\end{align*}%
and thus
\begin{equation*}
\mathbb{\eta }_{(p,q)}^{m-pol}\left( E;F\right) \leq \frac{1}{p}+\frac{m(q-2)%
}{2q}.
\end{equation*}
\end{proof}



\section{Main results: vector-valued maps}

We begin this section with a (simple) technical lemma.

\begin{lemma}
\label{lemar} Let $E$ be an $n$-dimensional Banach space. If $1 \leq d \leq s
\leq 2,$ then there exists a constant $K>0$ such that
\begin{equation*}
Kn^{\frac{2d+s(d-2)}{2sd}} \leq \pi_{s,d}^{(n)}(id_{E}).
\end{equation*}
\end{lemma}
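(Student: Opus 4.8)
The plan is to reduce the whole statement to a single application of Hölder's inequality, once a good family of test vectors is in hand. The lower bound for $\pi_{s,d}^{(n)}(id_{E})$ only requires exhibiting $n$ (or $\lfloor n/2\rfloor$) vectors whose ratio $\bigl(\sum\|x_k\|^{s}\bigr)^{1/s}/\|(x_k)\|_{w,d}$ is already at least $Kn^{(2d+s(d-2))/2sd}$, so the task is to choose the vectors and then estimate the denominator.

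First I would produce the test vectors using the geometric input flagged in Section 2, namely the Dvoretzky--Rogers lemma: in the $n$-dimensional space $E$ there exist norm-one vectors $x_1,\dots,x_n$ for which the formal operator $\ell_2^{n}\to E$ sending $e_k$ to $x_k$ is bounded by a universal constant; equivalently, there is an absolute constant $C_0>0$ with $\|x_k\|=1$ for every $k$ and $\|(x_k)_{k=1}^{n}\|_{w,2}\le C_0$. (A version producing only $\lfloor n/2\rfloor$ such vectors is equally good and merely changes $K$.) This is precisely the device used to show that the identity of an infinite-dimensional space fails to be absolutely summing, so it is legitimately available here. With these vectors the left-hand side is immediate, since $\|id_E(x_k)\|=\|x_k\|=1$ gives $\bigl(\sum_{k=1}^{n}\|x_k\|^{s}\bigr)^{1/s}=n^{1/s}$.

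For the denominator I would pass from the weak $d$-norm to the weak $2$-norm by the elementary estimate $\|a\|_{d}\le n^{\frac1d-\frac12}\|a\|_{2}$, valid because $d\le s\le 2$: applying it to $a=(\varphi(x_k))_{k=1}^n$ for each $\varphi\in B_{E^{*}}$ and taking the supremum yields
\begin{equation*}
\|(x_k)_{k=1}^{n}\|_{w,d}\le n^{\frac1d-\frac12}\|(x_k)_{k=1}^{n}\|_{w,2}\le C_0\,n^{\frac1d-\frac12}.
\end{equation*}
Inserting the chosen vectors into the definition of $\pi_{s,d}^{(n)}(id_E)$ then gives $n^{1/s}\le \pi_{s,d}^{(n)}(id_E)\,C_0\,n^{\frac1d-\frac12}$, that is $\pi_{s,d}^{(n)}(id_E)\ge C_0^{-1}n^{\frac1s+\frac12-\frac1d}$, which is the asserted inequality with $K=C_0^{-1}$ once one rewrites $\frac1s+\frac12-\frac1d=\frac{2d+s(d-2)}{2sd}$. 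The only genuinely nontrivial ingredient, and the step I expect to be the main obstacle, is the existence of unit vectors with uniformly bounded weak $2$-norm: a naive choice such as an Auerbach basis can have $\|(x_k)\|_{w,2}$ as large as $n^{1/2}$ (for instance in $\ell_1^{n}$), which would wreck the estimate, so one must genuinely invoke Dvoretzky--Rogers (equivalently John's theorem) rather than an arbitrary normalized basis. Everything after that is routine, and the hypothesis $1\le d\le s\le 2$ enters only through $d\le 2$, which fixes the correct direction of the Hölder inclusion.
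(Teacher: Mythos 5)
Your proof is correct, and it takes a genuinely different route from the paper's. The paper constructs no test vectors: it first applies the Inclusion Theorem for summing operators to get $\pi_{q,2}^{(n)}(id_{E})\leq\pi_{s,d}^{(n)}(id_{E})$ with $q=\frac{2sd}{2d+s(d-2)}\geq 2$ (this is where $d\leq s$ enters), then invokes Szarek's Theorem \ref{szarek} to pass from the few-vector norm $\pi_{q,2}^{(n)}$ back up to the full norm $\pi_{q,2}$ at the cost of a universal constant, and finally quotes the K\"onig--Retherford--Tomczak-Jaegermann bound (Theorem \ref{konig}) $An^{1/q}\leq\pi_{q,2}(id_{E})$. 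You replace both of these nontrivial inputs by the classical Dvoretzky--Rogers lemma plus H\"older, producing explicit witnesses: unit vectors with $\Vert(x_{k})_{k}\Vert_{w,2}\leq C_{0}$ give $n^{1/s}\leq\pi_{s,d}^{(n)}(id_{E})\,C_{0}\,n^{\frac{1}{d}-\frac{1}{2}}$, and indeed $\frac{1}{s}+\frac{1}{2}-\frac{1}{d}=\frac{2d+s(d-2)}{2sd}$. Two details that make your version airtight, both of which you anticipated: the fallback to the $\lfloor n/2\rfloor$-vector form of Dvoretzky--Rogers is genuinely needed (the standard lemma, e.g.\ in the cited Diestel--Jarchow--Tonge book, halves the dimension), and it is harmless because the exponent lies in $[0,\frac{1}{2}]$ under the hypotheses ($\geq 0$ since $s\leq 2$ and $d\geq 1$; $\leq\frac{1}{2}$ since $d\leq s$), so passing from $m=\lfloor n/2\rfloor$ to $n$ via the monotonicity $\pi_{s,d}^{(m)}\leq\pi_{s,d}^{(n)}$ (pad the list with zero vectors) only changes the universal constant $K$. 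As for what each approach buys: the paper's argument is a three-line reduction given Theorems \ref{szarek} and \ref{konig}, which it must quote anyway because both are reused directly in the proof of Theorem \ref{cotipo}; yours is more elementary and self-contained, yields an explicit constant (Szarek's constant is not explicit), and shows concretely why restricting to $n$ vectors costs nothing for identity maps---which is precisely the content that Szarek's theorem encodes abstractly.
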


\begin{proof}
Using the Inclusion Theorem \cite[Theorem 10.4]{Diestel} we have

\begin{equation*}
\pi _{\frac{2sd}{2d+s(d-2)},2}^{(n)}(id_{E})\leq \pi _{s,d}^{(n)}(id_{E})
\end{equation*}%
and by invoking Theorem \ref{szarek} we know that there is a constant $C>0$
such that
\begin{equation*}
\frac{1}{C}\pi _{\frac{2sd}{2d+s(d-2)},2}(id_{E})\leq \pi _{\frac{2sd}{%
2d+s(d-2)},2}^{(n)}(id_{E}).
\end{equation*}%
Theorem \ref{konig} assures the existence of a constant $A>0$ such that
\begin{equation*}
An^{\frac{1}{\frac{2sd}{2d+s(d-2)}}}\leq \pi _{\frac{2sd}{2d+s(d-2)}%
,2}(id_{E}).
\end{equation*}%
Therefore
\begin{equation*}
Kn^{\frac{2d+s(d-2)}{2sd}}\leq \pi _{s,d}^{(n)}(id_{E}),
\end{equation*}%
where $K=A/C$.
\end{proof}

We recall that for $2\leq q\leq \infty $, a Banach space $E$ has cotype $q$
if there is a constant $C\geq 0$ such that no matter how we select finitely
many vectors $x_{1},...,x_{n}$ from $E$,
\begin{equation*}
\left( \sum_{k=1}^{n}\left\Vert x_{k}\right\Vert ^{q}\right) ^{\frac{1}{q}%
}\leq C\left( \int_{0}^{1}\left\Vert \sum_{k=1}^{n}r_{k}(t)x_{k}\right\Vert
^{2}dt\right) ^{\frac{1}{2}},
\end{equation*}%
where $r_{k}$ denotes the $k$-th Rademacher function, that is, given $k\in
\mathbb{N}\text{ and }t\in \left[ 0,1\right] ,$ we have $r_{k}(t)=\mathrm{%
sign}\left[ \mathrm{sin}\left( 2^{k}\pi t\right) \right] .$ When $q=\infty ,$
the left hand side will be replaced by the sup norm. It is plain that if $%
q_{1}\leq q_{2},$ then $E$ has cotype $q_{1}$ implies that $E$ has cotype $%
q_{2};$ thus, henceforth, we will denote $\inf \{q:E\mbox{ has cotype }q\}$
by $\cot (E)$.

Now we state and prove the main result of this section. The arguments are
based in ideas taken from \cite{bbr, lin}:

\begin{theorem}
\label{cotipo} Let $E,F$ be infinite dimensional Banach spaces and $r:=\cot
\left( F\right) .$

\begin{enumerate}
\item[(a)] For $1 \leq q \leq 2$ and $0< p \leq \frac{rq}{mr+q}$, we have
\begin{equation*}
\frac{m}{2} \leq \eta _{(p,q)}^{m-pol}\left( E;F\right).
\end{equation*}

\item[(b)] For $1 \leq q \leq 2$ and $\frac{rq}{mr+q} \leq p \leq \frac{2r}{mr+2}$%
, we have
\begin{equation*}
\frac{mp+2}{2p}-\frac{mr+q}{rq} \leq \eta _{(p,q)}^{m-pol}\left( E;F\right).
\end{equation*}

\item[(c)] For $2 \leq q < \infty \text{ and } 0 < p \leq \frac{2r}{mr+2},$ we
have
\begin{equation*}
\frac{m}{2}\leq \eta _{(p,q)}^{m-pol}\left( E;F\right).
\end{equation*}

\item[(d)] For $2 \leq q < \infty \text{ and } \frac{2r}{mr+2} < p < r,$ we
have
\begin{equation*}
\frac{r-p}{pr}\leq \eta _{(p,q)}^{m-pol}\left( E;F\right).
\end{equation*}
\end{enumerate}
\end{theorem}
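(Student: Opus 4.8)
The plan is to prove each lower bound by exhibiting, for the given (arbitrary) infinite dimensional $E$ and $F$, a \emph{single} $m$-homogeneous polynomial $P\in\mathcal{P}(^mE;F)$ together with finite families of vectors witnessing that the summability inequality must fail for every exponent below the stated threshold. It is essential that one produces one fixed $P$ rather than a sequence of finite dimensional examples: by the very definition of $\eta_{(p,q)}^{m-pol}(E;F)$ the constant $C$ is allowed to depend on $P$, so to rule out an exponent $s$ one needs a polynomial $P$ for which $\sup_n (\sum_{j}\Vert P(x_j)\Vert^p)^{1/p}\big/\big(n^{s}\Vert(x_j)_{j=1}^n\Vert_{w,q}^{m}\big)=+\infty$.

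Two structural ingredients feed the construction. On the domain side, since $E$ is infinite dimensional, Dvoretzky's theorem supplies, for every $n$ and every $\varepsilon>0$, a subspace of $E$ that is $(1+\varepsilon)$-isomorphic to $\ell_2^n$; the unit vector systems of these subspaces have weak $q$-norm of order $n^{\max\{0,\,1/q-1/2\}}$, which is the smallest order one can guarantee in an \emph{arbitrary} $E$. On the range side, because $r=\cot(F)$, the Maurey--Pisier theorem guarantees that $\ell_r$ is finitely representable in $F$, so for every $n$ there are $f_1,\dots,f_n\in F$ with $\Vert\sum a_jf_j\Vert\approx(\sum|a_j|^r)^{1/r}$. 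The finite dimensional lower bounds for summing norms of identities then calibrate the exponents: Theorem~\ref{normaidentidade} ($\pi_2(\mathrm{id})=\sqrt n$), Theorem~\ref{konig} and Lemma~\ref{lemar} quantify how far the relevant identities are from being summing and, through the factor $\sqrt n$ iterated over the $m$ slots, account for the recurring $m/2$.

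Concretely, I would build $P$ as a weighted diagonal (power) map over a sequence of disjoint finite dimensional blocks $X_k\cong\ell_2^{n_k}\subseteq E$ and $Y_k\cong\ell_r^{n_k}\subseteq F$, roughly $P(x)=\sum_k a_k\sum_{j}\varphi_{k,j}(x)^m f_{k,j}$, choosing the scalar weights $a_k$ as slowly decaying as boundedness of $P$ permits. Here the cotype of $F$ is decisive: it forces the $F$-side contributions of distinct blocks to accumulate at least like an $\ell_r$-sum, so the admissible weights decay essentially like $k^{-1/r}$; evaluating $P$ on the block basis and using $\sum_{k\le n}k^{-p/r}\approx n^{1-p/r}$ (valid since $p<r$) produces a left hand side of order $n^{1/p-1/r}$. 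Dividing by the weak-norm factor $n^{m\max\{0,1/q-1/2\}}$ yields the ratio whose exponent is the announced bound. The four cases then split exactly along $q\lessgtr 2$ and along the breakpoints $\tfrac{rq}{mr+q}$ and $\tfrac{2r}{mr+2}$: for small $p$ (parts (a) and (c)) the ratio is dominated by the iterated Pietsch factor and saturates at $m/2$, whereas for larger $p$ (parts (b) and (d)) the $p$-dependent quantity $\tfrac1p-\tfrac1r$ takes over, combined when $q\le2$ with the weak-norm exponent $m(1/q-1/2)$; one checks that the two regimes agree at the breakpoints, a useful consistency test.

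The main obstacle is precisely the manufacture of one \emph{bounded} polynomial on the whole of $E$ out of the finite dimensional data. For $m\ge2$ no polynomial can be bounded below on an infinite dimensional subspace, and an arbitrary $E$ need not contain any infinite dimensional $\ell_p$; hence one cannot simply use a coherent power map as one would for $E=\ell_2,\ F=\ell_r$. The gluing must reconcile two opposing requirements: boundedness of $P$ needs \emph{upper} estimates for how the blocks $Y_k$ combine in $F$, whereas cotype only provides \emph{lower} ($\ell_r$-type) estimates, so the weights $a_k$ must be tuned carefully while simultaneously the weak-$q$-norm of the test vectors is kept at the $\ell_2^n$ level guaranteed by Dvoretzky. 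Making this gluing rigorous---passing to a basic sequence or finite dimensional decomposition to control the weak-norm of block vectors inside $E$, and verifying convergence and boundedness of $P$---is where the real work lies; the numerical exponents themselves then fall out of Lemma~\ref{lemar}, Theorem~\ref{konig} and Theorem~\ref{normaidentidade}.
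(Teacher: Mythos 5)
Your plan, as you yourself concede, stands or falls on the ``gluing'' step, and that step is not a deferred technicality: it is precisely where the argument fails for general $E$ and $F$. Cotype $r$ of $F$ (equivalently, Maurey--Pisier copies of $\ell_r^{n_k}$) gives you good estimates \emph{inside} each block $Y_k$, but no upper estimate whatsoever on how distinct blocks add up in $F$; so there is no way to certify that $P(x)=\sum_k a_k\sum_j\varphi_{k,j}(x)^m f_{k,j}$ is even well defined and bounded, whatever decay you impose on $(a_k)$. On the domain side there is a second unaddressed problem: a general infinite dimensional $E$ has no basis or finite dimensional decomposition, so you cannot keep the weak $q$-norm of vector families drawn from \emph{different} Dvoretzky blocks at the $\ell_2^n$ level. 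Since every one of the bounds (a)--(d) rests on this unconstructed polynomial, the proposal does not prove any of them.

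Moreover, the premise that forces you into this corner --- that one \emph{must} produce a single fixed $P$ --- is mistaken, and dropping it is exactly what makes the theorem accessible. The left-hand side of the defining inequality is homogeneous in $P$, so the optimal constant scales linearly under scalar multiples of $P$; and the maps $P\mapsto\sup\{(\sum_{j=1}^{n}\Vert P(x_j)\Vert^p)^{1/p}:\Vert(x_j)_{j=1}^{n}\Vert_{w,q}\leq 1\}$ are continuous (quasi-)seminorms on the Banach space $\mathcal{P}(^mE;F)$, so pointwise boundedness (which is what the definition of $\eta$ gives for an admissible exponent $t$) upgrades by Baire category to a uniform bound of the form $D\Vert P\Vert n^{t}\Vert(x_j)_{j=1}^{n}\Vert_{w,q}^{m}$. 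Hence it suffices to exhibit a \emph{sequence} of polynomials with uniformly bounded norms that violates this homogeneous inequality ever more strongly, and that is the paper's route: given arbitrary $x_1,\dots,x_n\in E$ with norming functionals $x_j^{\ast}$, and using that $F$ finitely factors $\ell_r\hookrightarrow\ell_\infty$ (the cotype supremum is attained), it sets $P_n(x)=\sum_{j=1}^{n}\vert a_j\vert^{1/p}x_j^{\ast}(x)^m y_j$ with $\sum_{j}\vert a_j\vert^{r/p}=1$, obtains $\Vert P_n\Vert\leq C_2$ and $\Vert P_n(x_k)\Vert\geq C_1\vert a_k\vert^{1/p}\Vert x_k\Vert^m$, and, dualizing over $(a_j)$, converts the assumed inequality into the finite dimensional estimate $\pi^{(n)}_{\frac{mpr}{r-p},q}(id_X)\leq Q^{1/m}n^{t/m}$ valid for \emph{every} $n$-dimensional subspace $X\subseteq E$; the four cases then follow from Theorem \ref{szarek}, Theorem \ref{konig}, Theorem \ref{normaidentidade} and Lemma \ref{lemar}, with no Dvoretzky theorem and no gluing. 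Your exponent bookkeeping is at least consistent with the statement (the bounds in (b) and (d) are indeed $(1/p-1/r)-m(1/q-1/2)$ and $1/p-1/r$), but without a construction of the single bounded $P$ the proposal has a genuine gap that the paper's finite-rank, vector-dependent polynomials are specifically designed to avoid.
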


\begin{proof}
Since $F$ is infinite dimensional, from \cite[Theorem 14.5]{Diestel} we have
\begin{equation*}
\mathrm{cot}(F)=\sup \{2\leq s\leq \infty :F\text{ finitely factors the
formal inclusion }\ell _{s}\hookrightarrow \ell _{\infty }\},
\end{equation*}%
and from \cite[p.304]{Diestel} we know that this supremum is attained. So $F$
finitely factors the formal inclusion $\ell _{r}\hookrightarrow \ell
_{\infty },$ that is, there exist $C_{1},C_{2}>0$ such that for every $n\in
\mathbb{N},$ there are $y_{1},...,y_{n}\in F$ so that
\begin{equation}
C_{1}\left\Vert \left( a_{j}\right) _{j=1}^{n}\right\Vert _{\infty }\leq
\left\Vert \sum_{j=1}^{n}a_{j}y_{j}\right\Vert \leq C_{2}\left(
\sum_{j=1}^{n}\left\vert a_{j}\right\vert ^{r}\right) ^{\frac{1}{r}}
\label{eqcot}
\end{equation}%
for every $a_{1},...,a_{n}\in \mathbb{K}.$

Consider $x_{1}^{\ast },\ldots ,x_{n}^{\ast }\in B_{E^{\ast }}$ such that $%
x_{j}^{\ast }(x_{j})=\left\Vert x_{j}\right\Vert $ for every $j=1,\ldots ,n$%
. Let $a_{1},\ldots ,a_{n}$ be scalars such that $\sum\limits_{j=1}^{n}\left%
\vert a_{j}\right\vert ^{\frac{r}{p}}=1$ and define
\begin{equation*}
P_{n}\colon E\longrightarrow F~,~P_{n}(x)=\sum\limits_{j=1}^{n}\left\vert
a_{j}\right\vert ^{\frac{1}{p}}x_{j}^{\ast }(x)^{m}y_{j}.
\end{equation*}%
Then for every $x\in E$, by (\ref{eqcot})
\begin{equation*}
\left\Vert P_{n}(x)\right\Vert =\left\Vert \sum\limits_{j=1}^{n}\left\vert
a_{j}\right\vert ^{\frac{1}{p}}x_{j}^{\ast }(x)^{m}y_{j}\right\Vert \leq
C_{2}\left( \sum\limits_{j=1}^{n}\left\vert \left\vert a_{j}\right\vert ^{%
\frac{1}{p}}x_{j}^{\ast }(x)^{m}\right\vert ^{r}\right) ^{\frac{1}{r}}\leq
C_{2}\left( \sum\limits_{j=1}^{n}\left\vert a_{j}\right\vert ^{\frac{r}{p}%
}\right) ^{\frac{1}{r}}\left\Vert x\right\Vert ^{m}=C_{2}\left\Vert
x\right\Vert ^{m},
\end{equation*}%
and thus
\begin{equation}
\left\Vert P_{n}\right\Vert \leq C_{2}.  \label{pn}
\end{equation}%
Note that for $k=1,\ldots ,n$, from (\ref{eqcot}), we have
\begin{equation}
\left\Vert P_{n}(x_{k})\right\Vert =\left\Vert
\sum\limits_{j=1}^{n}\left\vert a_{j}\right\vert ^{\frac{1}{p}}x_{j}^{\ast
}(x_{k})^{m}y_{j}\right\Vert \geq C_{1}\left\Vert \left( \left\vert
a_{j}\right\vert ^{\frac{1}{p}}x_{j}^{\ast }(x_{k})^{m}\right)
_{j=1}^{n}\right\Vert _{\infty }\geq C_{1}\left\vert a_{k}\right\vert ^{%
\frac{1}{p}}x_{k}^{\ast }(x_{k})^{m}=C_{1}\left\vert a_{k}\right\vert ^{%
\frac{1}{p}}\left\Vert x_{k}\right\Vert ^{m}.  \label{766}
\end{equation}%
Hence,
\begin{align*}
\left( \sum\limits_{j=1}^{n}\left\Vert x_{j}\right\Vert ^{mp}\left\vert
a_{j}\right\vert \right) ^{\frac{1}{p}}& =\left( \sum\limits_{j=1}^{n}\left(
\left\Vert x_{j}\right\Vert ^{m}\left\vert a_{j}\right\vert ^{\frac{1}{p}%
}\right) ^{p}\right) ^{\frac{1}{p}} \\
& =\frac{1}{C_{1}}\left( \sum\limits_{j=1}^{n}\left( C_{1}\left\Vert
x_{j}\right\Vert ^{m}\left\vert a_{j}\right\vert ^{\frac{1}{p}}\right)
^{p}\right) ^{\frac{1}{p}} \\
& \overset{(\ref{766})}{\leq }\frac{1}{C_{1}}\left(
\sum\limits_{j=1}^{n}\left\Vert P_{n}(x_{j})\right\Vert ^{p}\right) ^{\frac{1%
}{p}}.
\end{align*}%
Suppose that there exists $t\geq 0\text{ and }D>0$ such that%
\begin{equation*}
\left( \sum\limits_{j=1}^{n}\left\Vert P_{n}\left( x_{j}\right) \right\Vert
^{p}\right) ^{\frac{1}{p}}\leq Dn^{t}\left\Vert P_{n}\right\Vert \left\Vert
\left( x_{j}\right) _{j=1}^{n}\right\Vert _{w,q}^{m},
\end{equation*}%
hence
\begin{equation}
\left( \sum\limits_{j=1}^{n}\left\Vert x_{j}\right\Vert ^{mp}\left\vert
a_{j}\right\vert \right) ^{\frac{1}{p}}\leq \frac{D}{C_{1}}\left\Vert
P_{n}\right\Vert n^{t}\left\Vert (x_{j})_{j=1}^{n}\right\Vert _{w,q}^{m}.
\label{123}
\end{equation}%
Since this last inequality holds whenever $\sum\limits_{j=1}^{n}\left\vert
a_{j}\right\vert ^{\frac{r}{p}}=1$ and $p<r,$ we have

\begin{align*}
\left( \sum\limits_{j=1}^{n}\left\Vert x_{j}\right\Vert ^{mp\left( \frac{r}{p%
}\right) ^{\ast }}\right) ^{\frac{1}{\left( \frac{r}{p}\right) ^{\ast }}}& =
\sup \left\{ \left\vert \sum\limits_{j=1}^{n} a_{j}\left\Vert
x_{j}\right\Vert ^{mp} \right\vert :\sum\limits_{j=1}^{n}\vert a_{j}\vert ^{%
\frac{r}{p}}=1\right\} \\
& \leq \sup \left\{ \sum\limits_{j=1}^{n}\left\vert a_{j}\right\vert
\left\Vert x_{j}\right\Vert ^{mp}:\sum\limits_{j=1}^{n}\left\vert
a_{j}\right\vert ^{\frac{r}{p}}=1\right\} \\
& \overset{(\ref{123})}{\leq} \left( \frac{D}{C_{1}} \left\Vert P_{n}\right
\Vert n^{t}\left\Vert (x_{j})_{j=1}^{n}\right\Vert _{w,q}^{m}\right) ^{p} \\
& \overset{(\ref{pn})}{\leq} \left( \frac{DC_{2}}{C_{1}} n^{t}\left\Vert
(x_{j})_{j=1}^{n}\right\Vert _{w,q}^{m}\right) ^{p}
\end{align*}
and thus, denote $\frac{DC_{2}}{C_{1}} := Q$
\begin{equation*}
\frac{\left( \sum\limits_{j=1}^{n}\left\Vert x_{j}\right\Vert ^{mp\left(
\frac{r}{p}\right) ^{\ast }}\right) ^{\frac{1}{\left( \frac{r}{p}\right)
^{\ast }}}}{\left\Vert (x_{j})_{j=1}^{n}\right\Vert _{w,q}^{mp}}\leq
n^{tp}Q^{p}.
\end{equation*}

Therefore%
\begin{equation}  \label{ge}
\frac{\left( \sum\limits_{j=1}^{n}\left\Vert x_{j}\right\Vert ^{mp\left(
\frac{r}{p}\right) ^{\ast }}\right) ^{\frac{1}{mp\left( \frac{r}{p}\right)
^{\ast }}}}{\left\Vert (x_{j})_{j=1}^{n}\right\Vert _{w,q}}\leq n^{\frac{t}{m%
}}Q^{\frac{1}{m}}.
\end{equation}

Note that (\ref{ge}) is valid for any $x_{1},...,x_{n}$. So, for any $n$%
-dimensional subspace $X$ of $E$ we have
\begin{equation}  \label{yyy}
\frac{\left( \sum\limits_{j=1}^{n}\left\Vert id_{X}(x_{j})\right\Vert
^{mp\left( \frac{r}{p}\right) ^{\ast }}\right) ^{\frac{1}{mp\left( \frac{r}{p%
}\right) ^{\ast }}}}{\left\Vert (x_{j})_{j=1}^{n}\right\Vert _{w,q}}\leq n^{%
\frac{t}{m}}Q^{\frac{1}{m}},
\end{equation}
for all $x_{1},...,x_{n}\in X$.

(a) Since
\begin{equation*}
0<p\leq \frac{rq}{mr+q},
\end{equation*}%
we have
\begin{equation*}
mp\left( \frac{r}{p}\right) ^{\ast }\leq q,
\end{equation*}%
and
\begin{equation*}
\frac{\left( \sum\limits_{j=1}^{n}\left\Vert id_{X}(x_{j})\right\Vert
^{q}\right) ^{\frac{1}{q}}}{\left\Vert (x_{j})_{j=1}^{n}\right\Vert _{w,q}}%
\leq n^{\frac{t}{m}}Q^{\frac{1}{m}}. 
\end{equation*}

So
\begin{equation*}
\pi_{q}^{(n)}(id_{X})\leq n^{\frac{t}{m}}Q^{\frac{1}{m}}.
\end{equation*}

Since $q\leq 2$, by \cite[Theorem 2.8]{Diestel} we get

\begin{equation}  \label{e}
\pi_{2}^{(n)}(id_{X}) \leq n^{\frac{t}{m}}Q^{\frac{1}{m}}.
\end{equation}

Now Theorem \ref{szarek} assures us that there is a constant $C>0$ such that
\begin{equation}
\pi _{2}(id_{X})\leq C\pi _{2}^{(n)}(id_{X}).  \label{e20}
\end{equation}%
Using (\ref{e}), (\ref{e20}) and Theorem \ref{normaidentidade} we obtain
\begin{equation*}
\frac{1}{C}n^{1/2}\leq n^{t/m}Q^{\frac{1}{m}}.
\end{equation*}%
Thus%
\begin{equation*}
t\geq \frac{m}{2}.
\end{equation*}%
Therefore
\begin{equation*}
\eta _{(p,q)}^{m-pol}\left( E;F\right) \geq \frac{m}{2}.
\end{equation*}%
\bigskip

(b) By (\ref{yyy}), we have
\begin{equation}
\pi _{mp\left( \frac{r}{p}\right) ^{\ast },q}^{(n)}(id_{X})\leq n^{\frac{t}{m%
}}Q^{\frac{1}{m}}.  \label{11a}
\end{equation}

Since $\frac{rq}{mr+q} \leq p \leq \frac{2r}{mr+2}$ and $mp\left( \frac{r}{p}%
\right) ^{\ast }=\frac{mpr}{r-p},$ we have $q \leq mp\left( \frac{r}{p}\right)
^{\ast }\leq 2$. From Lemma \ref{lemar}, there is a constant $K>0$ such that
\begin{equation}
Kn^{\frac{2q+mp\left( \frac{r}{p}\right) ^{\ast }(q-2)}{2mp\left( \frac{r}{p}%
\right) ^{\ast }q}} \leq \pi _{mp\left( \frac{r}{p}\right) ^{\ast
},q}^{(n)}(id_{X}).  \label{11}
\end{equation}
From (\ref{11a}) and (\ref{11}) it follows that

\begin{equation*}
Kn^{\frac{2q+mp\left( \frac{2}{p}\right) ^{\ast }(q-2)}{2mp\left( \frac{2}{p}%
\right) ^{\ast }q}}\leq n^{t/m}Q^{\frac{1}{m}}.
\end{equation*}%
Thus%
\begin{equation*}
\frac{t}{m}\geq \frac{mp+2}{2mp}-\frac{mr+q}{mrq}
\end{equation*}%
and we conclude that

\begin{equation*}
t\geq \frac{mp+2}{2p}-\frac{mr+q}{rq}.
\end{equation*}%
Therefore
\begin{equation*}
\eta _{(p,q)}^{m-pol}\left( E;F\right) \geq \frac{mp+2}{2p}-\frac{mr+q}{rq}.
\end{equation*}%
\bigskip

(c) Since $q\geq 2$, we obtain
\begin{equation*}
\frac{\left( \sum\limits_{j=1}^{n}\left\Vert id_{X}(x_{j})\right\Vert
^{mp\left( \frac{r}{p}\right) ^{\ast }}\right) ^{\frac{1}{mp\left( \frac{r}{p%
}\right) ^{\ast }}}}{\left\Vert (x_{j})_{j=1}^{n}\right\Vert _{w,2}}\leq n^{%
\frac{t}{m}}Q^{\frac{1}{m}},
\end{equation*}%
for all $x_{1},...,x_{n}\in X$. But $\frac{2r}{mr+2}\geq p$ implies that $%
mp\left( \frac{r}{p}\right) ^{\ast }\leq 2$, and thus

\begin{equation*}
\frac{\left( \sum\limits_{j=1}^{n}\left\Vert id_{X}(x_{j})\right\Vert
^{2}\right) ^{\frac{1}{2}}}{\left\Vert (x_{j})_{j=1}^{n}\right\Vert _{w,2}}%
\leq n^{\frac{t}{m}}Q^{\frac{1}{m}}.
\end{equation*}%
Therefore
\begin{equation*}
\pi _{2}^{(n)}(id_{X})\leq n^{\frac{t}{m}}Q^{\frac{1}{m}}.
\end{equation*}

From Theorem \ref{szarek} it follows that
\begin{equation*}
\pi _{2}(id_{X})\leq C\pi _{2}^{(n)}(id_{X}).
\end{equation*}%
By Theorem \ref{normaidentidade}, we have

\begin{equation*}
\frac{1}{C}n^{\frac{1}{2}}=\frac{1}{C}\pi _{2}(id_{X})\leq n^{t/m}Q^{\frac{1%
}{m}},
\end{equation*}%
and we conclude that%
\begin{equation*}
t\geq \frac{m}{2},
\end{equation*}%
that is
\begin{equation*}
\eta _{(p,q)}^{m-pol}\left( E;F\right) \geq \frac{m}{2}.
\end{equation*}

\bigskip

(d) Since $q \geq 2$, we have
\begin{equation*}
\frac{\left( \sum\limits_{j=1}^{n}\left\Vert id_{X}(x_{j})\right\Vert
^{mp\left( \frac{r}{p}\right) ^{\ast }}\right) ^{\frac{1}{mp\left( \frac{r}{p%
}\right) ^{\ast }}}}{\left\Vert (x_{j})_{j=1}^{n}\right\Vert _{w,2}}\leq n^{%
\frac{t}{m}}Q ^{\frac{1}{m}},
\end{equation*}
for all $x_{1},...,x_{n}\in X$. Then

\begin{equation*}
\pi_{mp\left( \frac{r}{p}\right) ^{\ast },2}^{(n)}(id_{X}) \leq n^{\frac{t}{m%
}}Q ^{\frac{1}{m}}.  
\end{equation*}

But $\frac{2r}{mr+2}<p$ implies that $mp\left( \frac{r}{p}\right) ^{\ast }>2$%
, and from Theorem \ref{szarek} it follows that
\begin{equation*}
\pi _{mp\left( \frac{r}{p}\right) ^{\ast },2}(id_{X})\leq C\pi _{mp\left(
\frac{r}{p}\right) ^{\ast },2}^{(n)}(id_{X}).
\end{equation*}%
By Theorem \ref{konig}, there is a constant $A>0$ such that

\begin{equation*}
A\cdot n^{\frac{1}{mp\left( \frac{r}{p}\right) ^{\ast }}}\leq \pi _{mp\left(
\frac{r}{p}\right) ^{\ast },2}(id_{X}),
\end{equation*}%
and thus
\begin{equation*}
\frac{A}{C}n^{\frac{r-p}{mpr}}\leq n^{t/m}Q^{\frac{1}{m}}.
\end{equation*}%
Finally, we obtain%
\begin{equation*}
t\geq \frac{r-p}{pr},
\end{equation*}%
and
\begin{equation*}
\eta _{(p,q)}^{m-pol}\left( E;F\right) \geq \frac{r-p}{pr}.
\end{equation*}
\end{proof}

\begin{remark}
In the above result, there is a kind of continuity. In fact, when $p=\frac{rq%
}{mr+q},$ from (a) we have
\begin{equation*}
\frac{m}{2}\leq \eta _{(p,q)}^{m-pol}(E;F).  
\end{equation*}%
On the other hand, considering $p=\frac{%
rq}{mr+q} $ it follows from (b) that
\begin{equation*}
\frac{mp+2}{2p}-\frac{mr+q}{rq} = \frac{m}{2}\leq \eta
_{(p,q)}^{m-pol}(E;F). 
\end{equation*}%

Now, when $p=\frac{2r}{mr+2},$ by (c) we have
\begin{equation}
\frac{m}{2}\leq \eta _{(p,q)}^{m-pol}(E;F).  \label{cc}
\end{equation}%
Given $\epsilon >0$ and taking $p_{\epsilon }=\frac{2r}{mr+2}+\epsilon
$ it follows by (d) that
\begin{equation}
\frac{r-p_{\epsilon }}{p_{\epsilon }r}\leq \eta _{(p_{\epsilon
},q)}^{m-pol}(E;F).  \label{dd}
\end{equation}%
Again, there is a continuity between the lower estimates (\ref{cc}) and (\ref%
{dd}), because letting $\epsilon $ tend to zero, we have
\begin{equation*}
p_{\epsilon }\rightarrow \frac{2r}{mr+2}\ \ \text{ and }\ \ \frac{%
r-p_{\epsilon }}{p_{\epsilon }r}\rightarrow \frac{m}{2}.
\end{equation*}%
The same behavior happens when $q=2$.
\end{remark}

The next two results provide optimality of $\eta _{(p,q)}^{m-pol}(E;F)$ in
some cases:

\begin{corollary}
If $\frac{2}{2m+1} \leq p < \frac{2}{m+1}$, then $\eta_{(p,1)}^{m-pol}(%
\ell_{1};\ell_{2}) = \frac{1}{p}-\frac{m+1}{2}$.
\end{corollary}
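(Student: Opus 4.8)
The plan is to pin down the value $\tfrac{1}{p}-\tfrac{m+1}{2}$ by trapping $\eta_{(p,1)}^{m\text{-}pol}(\ell_{1};\ell_{2})$ between matching lower and upper bounds, the whole argument resting on the fact that $r:=\cot(\ell_{2})=2$.

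For the lower bound I would simply specialize Theorem \ref{cotipo}(b). Both $\ell_{1}$ and $\ell_{2}$ are infinite dimensional, so the theorem applies; with $q=1$ and $r=2$ the hypothesis $\frac{rq}{mr+q}\le p\le\frac{2r}{mr+2}$ reads exactly $\frac{2}{2m+1}\le p\le\frac{2}{m+1}$, which contains the interval in the statement. A direct computation gives
\begin{equation*}
\frac{mp+2}{2p}-\frac{mr+q}{rq}=\Big(\frac{m}{2}+\frac1p\Big)-\frac{2m+1}{2}=\frac1p-\frac{m+1}{2},
\end{equation*}
so Theorem \ref{cotipo}(b) yields $\eta_{(p,1)}^{m\text{-}pol}(\ell_{1};\ell_{2})\ge\frac1p-\frac{m+1}{2}$.

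For the upper bound I would first invoke the (sharp) coincidence that every $m$-homogeneous polynomial $P\in\mathcal{P}(^{m}\ell_{1};\ell_{2})$ is absolutely $\big(\frac{2}{m+1},1\big)$-summing; writing $p_{0}:=\frac{2}{m+1}$, there is a constant $\pi_{(p_{0},1)}(P)$ with $\big(\sum_{j=1}^{n}\|P(x_{j})\|^{p_{0}}\big)^{1/p_{0}}\le\pi_{(p_{0},1)}(P)\,\|(x_{j})_{j=1}^{n}\|_{w,1}^{m}$ for all $n$ and all $x_{1},\dots,x_{n}\in\ell_{1}$. Since $p<p_{0}$ throughout our range, I would then descend from exponent $p_{0}$ to $p$ by Hölder's inequality with exponents $p_{0}/p$ and its conjugate, which produces exactly the factor $n^{\frac1p-\frac1{p_{0}}}$:
\begin{equation*}
\Big(\sum_{j=1}^{n}\|P(x_{j})\|^{p}\Big)^{1/p}\le n^{\frac1p-\frac1{p_{0}}}\Big(\sum_{j=1}^{n}\|P(x_{j})\|^{p_{0}}\Big)^{1/p_{0}}.
\end{equation*}
Combining the two displays and using $\frac1{p_{0}}=\frac{m+1}{2}$ gives $\big(\sum_{j}\|P(x_{j})\|^{p}\big)^{1/p}\le\pi_{(p_{0},1)}(P)\,n^{\frac1p-\frac{m+1}{2}}\|(x_{j})_{j=1}^{n}\|_{w,1}^{m}$, whence $\eta_{(p,1)}^{m\text{-}pol}(\ell_{1};\ell_{2})\le\frac1p-\frac{m+1}{2}$. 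Together with the lower bound this forces equality.

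The main obstacle is the coincidence input in the previous paragraph. The crude pointwise estimate $\|P(x)\|\le\|P\|\,\|x\|^{m}$ used in Proposition \ref{pol} is too lossy here: even combining it with the cotype-$2$ bound $\pi_{2,1}(id_{X})\le C_{2}(\ell_{1})$ valid for every finite-dimensional $X\subset\ell_{1}$ (which comes from $(\sum\|x_{j}\|^{2})^{1/2}\le C_{2}\sup_{t}\|\sum_{j}r_{j}(t)x_{j}\|=C_{2}\|(x_{j})\|_{w,1}$) and then Hölder only yields the exponent $\frac1p-\frac m2$, overshooting the target by $\frac12$. Recovering that missing $n^{1/2}$ is precisely what the polynomial Grothendieck-type theorem for $\ell_{1}\to\ell_{2}$ (with sharp exponent $\frac{2}{m+1}$, reducing to Grothendieck's theorem when $m=1$) supplies, and it is the one place where one must exploit the special structure of \emph{both} the domain $\ell_{1}$ and the range $\ell_{2}$ rather than merely $\cot(\ell_{2})=2$.
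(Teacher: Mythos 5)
Your proposal is correct and follows essentially the same route as the paper: the lower bound by specializing Theorem \ref{cotipo}(b) with $q=1$, $r=\cot(\ell_2)=2$, and the upper bound by combining the Grothendieck-type coincidence $\mathcal{P}(^{m}\ell_{1};\ell_{2})=\mathcal{P}_{(\frac{2}{m+1},1)}(^{m}\ell_{1};\ell_{2})$ (the paper cites Bernardino for this) with H\"older's inequality to pass from exponent $\frac{2}{m+1}$ down to $p$ at the cost of the factor $n^{\frac{1}{p}-\frac{m+1}{2}}$. Your closing remark correctly identifies that this coincidence theorem, rather than the crude estimate $\|P(x)\|\le\|P\|\,\|x\|^{m}$, is the indispensable ingredient for sharpness.
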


\begin{proof}
Considering $q=1 \text{ and }r=2$ in the previous theorem item (b) we have
\begin{equation}  \label{ot1}
\eta_{(p,1)}^{m-pol}(\ell_{1};\ell_{2}) \geq \frac{1}{p}-\frac{m+1}{2}.
\end{equation}

Let us show that (\ref{ot1}) is sharp. From \cite{Bernardino} we know that
every continuous $m$-homogeneous polynomial from $\ell _{1}$ to $\ell _{2}$
is absolutely $(\frac{2}{m+1},1)$-summing. Since $\frac{2}{2m+1} \leq p<\frac{2}{%
m+1}$, let $w>0$ be such that
\begin{equation*}
\frac{1}{p}=\frac{1}{\frac{2}{m+1}}+\frac{1}{w}. 
\end{equation*}%
Given $P\in \mathcal{P}\left( ^{m}\ell _{1};\ell _{2}\right) $, from the H%
\"{o}lder's inequality we have
\begin{equation*}
\left( \sum_{k=1}^{n}\left\Vert P(x_{k})\right\Vert ^{p}\right) ^{\frac{1}{p}%
}\leq n^{\frac{1}{w}}\left( \sum_{k=1}^{n}\left\Vert P(x_{k})\right\Vert ^{%
\frac{2}{m+1}}\right) ^{\frac{m+1}{2}}\leq Dn^{\frac{1}{p}-\frac{m+1}{2}%
}\left\Vert (x_{k})_{k=1}^{n}\right\Vert _{w,1}^{m}.
\end{equation*}
\end{proof}

\begin{corollary}
Let $K$ be a compact Hausdorff space and $F$ be an infinite dimensional
Banach space, with $\mathrm{cot}(F)=r.$ If $\frac{2r}{r+2}<p<r,$ then
\begin{equation*}
\eta _{(p,2)}(C(K);F)=\frac{1}{p}-\frac{1}{r}.
\end{equation*}
\end{corollary}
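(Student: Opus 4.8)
The plan is to prove the two matching inequalities for the linear index. Recall that, by the convention fixed right after the definition, $\eta_{(p,2)}(C(K);F)=\eta_{(p,2)}^{1-pol}(C(K);F)$, so the case $m=1$ of all the previous results is at my disposal. I may also assume $K$ is infinite, so that $C(K)$ is infinite dimensional and Theorem \ref{cotipo} applies with $E=C(K)$.

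\emph{Lower bound.} I would simply specialize Theorem \ref{cotipo}(d) to $m=1$ and $q=2$. The hypothesis $\frac{2r}{mr+2}<p<r$ becomes exactly $\frac{2r}{r+2}<p<r$, and the conclusion $\frac{r-p}{pr}\le \eta_{(p,q)}^{m-pol}(E;F)$ reads
\[
\frac{1}{p}-\frac{1}{r}=\frac{r-p}{pr}\le \eta_{(p,2)}(C(K);F).
\]
This requires no further work.

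\emph{Upper bound.} Here the essential ingredient is the classical extension of Grothendieck's theorem: if a Banach space $G$ has cotype $s$ with $2\le s<\infty$, then every bounded operator from a $C(K)$-space into $G$ is absolutely $(s,2)$-summing (see \cite{Diestel}). Since $\mathrm{cot}(F)=r$ is an infimum, for each fixed $s>r$ the space $F$ has cotype $s$; hence every $u\in\mathcal{L}(C(K);F)$ is $(s,2)$-summing with a finite constant $\pi_{s,2}(u)$ depending only on $u$. Because $p<r<s$, I would then apply H\"older's inequality with conjugate exponents $\frac{s}{p}$ and $\frac{s}{s-p}$ to the scalars $\left(\|u(x_k)\|^{p}\right)_{k=1}^{n}$, which gives
\[
\left(\sum_{k=1}^{n}\|u(x_k)\|^{p}\right)^{\frac1p}\le n^{\frac1p-\frac1s}\left(\sum_{k=1}^{n}\|u(x_k)\|^{s}\right)^{\frac1s}\le \pi_{s,2}(u)\,n^{\frac1p-\frac1s}\left\Vert(x_k)_{k=1}^{n}\right\Vert_{w,2}.
\]
Since $\pi_{s,2}(u)$ depends only on $u$, this shows that $\frac1p-\frac1s$ is an admissible exponent in the definition of the index, so $\eta_{(p,2)}(C(K);F)\le \frac1p-\frac1s$.

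\emph{Conclusion.} Letting $s\to r^{+}$ (equivalently, taking the infimum over $s>r$ of the admissible exponents $\frac1p-\frac1s$) yields $\eta_{(p,2)}(C(K);F)\le \frac1p-\frac1r$, and combining with the lower bound gives the announced equality. I expect the only genuine difficulty to lie in the upper bound, and specifically in correctly invoking the $C(K)$-summability theorem: the passage from ``$\mathrm{cot}(F)=r$'' (an infimum, so $F$ need not have cotype exactly $r$) to a usable $(s,2)$-summing statement is precisely why one works with $s>r$ and then takes a limit. The H\"older step and the limit are routine once that theorem is in hand; note in particular that the constants $\pi_{s,2}(u)$ may blow up as $s\to r^{+}$, but this is harmless, since only the exponent, and not the constant, enters the definition of $\eta$.
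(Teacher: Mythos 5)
Your proof is correct and follows the same two-step strategy as the paper: the lower bound is obtained exactly as in the paper, by specializing Theorem \ref{cotipo}(d) to $m=1$, $q=2$, and the upper bound comes from the Grothendieck-type theorem for operators on $C(K)$-spaces (\cite[Theorem 11.14]{Diestel}) combined with H\"older's inequality. The one genuine difference is in how the cotype hypothesis is used. The paper applies the theorem directly with exponent $r$, asserting that every $T\in\mathcal{L}(C(K);F)$ is $(r,2)$-summing because $\mathrm{cot}(F)=r$; strictly speaking this requires $F$ to \emph{have} cotype $r$, which need not follow from $\mathrm{cot}(F)=r$, since the infimum defining $\mathrm{cot}(F)$ is not always attained. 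Your version avoids this issue: you only use that $F$ has cotype $s$ for every $s>r$ (which is automatic), obtain the admissible exponent $\frac1p-\frac1s$ for each such $s$, and then exploit the fact that $\eta$ is itself defined as an infimum to pass to the limit $s\to r^{+}$. This costs one extra (routine) limiting step but yields a proof valid without any attainment assumption, and your observation that the blow-up of $\pi_{s,2}(u)$ as $s\to r^{+}$ is harmless --- because only the exponent, not the constant, enters the definition of $\eta$ --- is exactly the right justification. Your explicit restriction to infinite $K$ (so that Theorem \ref{cotipo} applies to $E=C(K)$) is also a point the paper glosses over; for finite $K$ the stated equality can in fact fail, so this hypothesis is genuinely needed.
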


\begin{proof}
By the previous theorem item (d), if $q=2\text{ and }\mathrm{cot}(F)=r$ we
have
\begin{equation}
\eta _{(p,2)}(C(K);F)\geq \frac{1}{p}-\frac{1}{r}.  \label{ot2}
\end{equation}

Let us show that (\ref{ot2}) is sharp. From \cite[Theorem 11.14]{Diestel} we
know that every continuous linear operator from $C(K)$ to $F$, with $\mathrm{%
cot}(F) = r,$ is absolutely $(r,2)$-summing.

Let $\frac{2r}{r+2}<p<r$ and let $w>0$ be such that
\begin{equation*}
\frac{1}{p}=\frac{1}{r}+\frac{1}{w}.
\end{equation*}%
Given $T\in \mathcal{L}\left( C(K);F\right) $, from the H\"{o}lder's
inequality we have
\begin{equation*}
\left( \sum_{k=1}^{n}\left\Vert T(x_{k})\right\Vert ^{p}\right) ^{\frac{1}{p}%
}\leq n^{\frac{1}{w}}\left( \sum_{k=1}^{n}\left\Vert T(x_{k})\right\Vert
^{r}\right) ^{\frac{1}{r}}\leq Dn^{\frac{1}{p}-\frac{1}{r}}\left\Vert
(x_{k})_{k=1}^{n}\right\Vert _{w,2},
\end{equation*}%
i.e.,
\begin{equation*}
\eta _{(p,2)}(C(K);F)\leq \frac{1}{p}-\frac{1}{r}.
\end{equation*}
\end{proof}


\bigskip

\section{Main results: real-valued maps}

The following result complements the results of the previous section (its
proof is inspired in techniques found in \cite{bpp}); now we consider the
case in which $m$ is even and $F=\mathbb{R}$.

\begin{theorem}
\label{realcommpar} Let $\ m\ $be an even positive integer and $E$ be an
infinite dimensional real Banach space.

\begin{enumerate}
\item[(a)] If $1\leq q\leq 2$ and $0<p\leq \frac{q}{m+q}$, then
\begin{equation*}
\frac{m}{2}\leq \eta _{(p,q)}^{m\text{-}pol}\left( E;\mathbb{R}\right) .
\end{equation*}

\item[(b)] If $1\leq q \leq 2$ and $\frac{q}{m+q}\leq p\leq \frac{2}{m+2}$, then
\begin{equation*}
\frac{mp+2}{2p}-\frac{m+q}{q}\leq \eta _{(p,q)}^{m\text{-}pol}\left( E;%
\mathbb{R}\right) .
\end{equation*}

\item[(c)] If $2\leq q<\infty \text{ and }0<p\leq \frac{2}{m+2},$ then
\begin{equation*}
\frac{m}{2}\leq \eta _{(p,q)}^{m\text{-}pol}\left( E;\mathbb{R}\right) .
\end{equation*}

\item[(d)] If $2\leq q<\infty \text{ and }\frac{2}{m+2}<p<1,$ then
\begin{equation*}
\frac{1-p}{p}\leq \eta _{(p,q)}^{m\text{-}pol}\left( E;\mathbb{R}\right) .
\end{equation*}
\end{enumerate}
\end{theorem}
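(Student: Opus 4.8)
The plan is to re-run the proof of Theorem~\ref{cotipo} almost verbatim, but to exploit the evenness of $m$ in order to replace the cotype-driven factorization of $\ell _{r}\hookrightarrow \ell _{\infty }$ by a factorization of $\ell _{1}\hookrightarrow \ell _{\infty }$ that comes for free. The organizing observation is that the four bounds stated here are precisely the four bounds of Theorem~\ref{cotipo} under the formal substitution $r=1$: the ranges $0<p\leq \frac{q}{m+q}$, $\frac{q}{m+q}\leq p\leq \frac{2}{m+2}$, $0<p\leq \frac{2}{m+2}$ and $\frac{2}{m+2}<p<1$ are obtained from those of items (a)--(d) of Theorem~\ref{cotipo} by setting $r=1$, and likewise $\frac{mp+2}{2p}-\frac{m+q}{q}$ and $\frac{1-p}{p}$ are the $r=1$ specializations of $\frac{mp+2}{2p}-\frac{mr+q}{rq}$ and $\frac{r-p}{pr}$. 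Since $\mathbb{R}$ has cotype $2$, naively applying Theorem~\ref{cotipo} with $r=2$ would yield strictly weaker bounds; the improvement to the effective value $r=1$ is exactly where the hypothesis that $m$ is even must enter.

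First I would fix $x_{1},\dots ,x_{n}\in E$ and, by the Hahn--Banach theorem, choose norming functionals $x_{j}^{\ast }\in B_{E^{\ast }}$ with $x_{j}^{\ast }(x_{j})=\left\Vert x_{j}\right\Vert $. For scalars $a_{1},\dots ,a_{n}$ with $\sum_{j=1}^{n}\left\vert a_{j}\right\vert ^{1/p}=1$ I would define the real-valued $m$-homogeneous polynomial
\begin{equation*}
P_{n}\colon E\longrightarrow \mathbb{R},\qquad P_{n}(x)=\sum_{j=1}^{n}\left\vert a_{j}\right\vert ^{\frac{1}{p}}\bigl(x_{j}^{\ast }(x)\bigr)^{m}.
\end{equation*}
Because $m$ is even, every term $\bigl(x_{j}^{\ast }(x)\bigr)^{m}$ is nonnegative; hence $\left\vert P_{n}(x)\right\vert =\sum_{j}\left\vert a_{j}\right\vert ^{1/p}\bigl(x_{j}^{\ast }(x)\bigr)^{m}\leq \bigl(\sum_{j}\left\vert a_{j}\right\vert ^{1/p}\bigr)\left\Vert x\right\Vert ^{m}=\left\Vert x\right\Vert ^{m}$, so that $\left\Vert P_{n}\right\Vert \leq 1$, and discarding all but the $k$-th (nonnegative) summand gives $\left\vert P_{n}(x_{k})\right\vert \geq \left\vert a_{k}\right\vert ^{1/p}\bigl(x_{k}^{\ast }(x_{k})\bigr)^{m}=\left\vert a_{k}\right\vert ^{1/p}\left\Vert x_{k}\right\Vert ^{m}$. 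These are exactly the inequalities (\ref{pn}) and (\ref{766}) of the proof of Theorem~\ref{cotipo}, now holding with $C_{1}=C_{2}=1$ and with summability exponent $r=1$.

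With these two bounds in hand the remainder is a transcription of the proof of Theorem~\ref{cotipo} with $r=1$. Assuming an inequality $\bigl(\sum_{j}\left\vert P_{n}(x_{j})\right\vert ^{p}\bigr)^{1/p}\leq Dn^{t}\left\Vert P_{n}\right\Vert \left\Vert (x_{j})_{j=1}^{n}\right\Vert _{w,q}^{m}$ and dualizing against the constraint $\sum_{j}\left\vert a_{j}\right\vert ^{1/p}=1$, one reaches the analogue of (\ref{yyy}): for every $n$-dimensional subspace $X\subseteq E$,
\begin{equation*}
\frac{\bigl(\sum_{j=1}^{n}\left\Vert id_{X}(x_{j})\right\Vert ^{mp\left( \frac{1}{p}\right) ^{\ast }}\bigr)^{\frac{1}{mp\left( \frac{1}{p}\right) ^{\ast }}}}{\left\Vert (x_{j})_{j=1}^{n}\right\Vert _{w,q}}\leq n^{\frac{t}{m}}Q^{\frac{1}{m}},
\end{equation*}
where $mp\left( \frac{1}{p}\right) ^{\ast }=\frac{mp}{1-p}$. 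Each case then matches the corresponding case of Theorem~\ref{cotipo}: in (a) the condition $p\leq \frac{q}{m+q}$ forces $\frac{mp}{1-p}\leq q$, so the left side dominates $\pi _{q}^{(n)}(id_{X})$ and, since $q\leq 2$, also $\pi _{2}^{(n)}(id_{X})$, whence Theorem~\ref{szarek} and Theorem~\ref{normaidentidade} give $t\geq \frac{m}{2}$; case (c) is the same with $q\geq 2$ and $p\leq \frac{2}{m+2}$ giving $\frac{mp}{1-p}\leq 2$ directly. In (b) one has $q\leq \frac{mp}{1-p}\leq 2$ and applies Lemma~\ref{lemar} with $s=\frac{mp}{1-p}$ and $d=q$, which after simplification yields $t\geq \frac{mp+2}{2p}-\frac{m+q}{q}$. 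In (d) the condition $p>\frac{2}{m+2}$ gives $\frac{mp}{1-p}>2$, so Theorem~\ref{szarek} and Theorem~\ref{konig} apply and deliver $t\geq \frac{1-p}{p}$.

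The only genuinely new ingredient is the construction of $P_{n}$ in the second paragraph, and the one thing to check carefully is that the evenness of $m$ truly furnishes the $\ell _{1}\hookrightarrow \ell _{\infty }$ factorization with the optimal constants $C_{1}=C_{2}=1$; this rests entirely on the sign-definiteness $\bigl(x_{j}^{\ast }(x)\bigr)^{m}\geq 0$, which prevents cancellation in $P_{n}$. Everything downstream is routine bookkeeping, the only care being to verify that all four ranges of $p$ satisfy $p<1$ (so that the duality step with $\ell _{1/p}$ is legitimate, exactly as $p<r$ was required in Theorem~\ref{cotipo}) and that the exponent arithmetic reproduces the four displayed bounds.
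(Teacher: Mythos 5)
Your proposal is correct and follows essentially the same route as the paper: the paper's proof constructs exactly the polynomial $P_{n}(x)=\sum_{j}\left\vert a_{j}\right\vert ^{1/p}x_{j}^{\ast }(x)^{m}$ with $\sum_{j}\left\vert a_{j}\right\vert ^{1/p}=1$, uses the evenness of $m$ to get $\left\Vert P_{n}\right\Vert \leq 1$ and $\left\vert P_{n}(x_{k})\right\vert \geq \left\vert a_{k}\right\vert ^{1/p}\left\Vert x_{k}\right\Vert ^{m}$ without any cotype factorization, dualizes against the constraint to reach the bound on $\pi _{\frac{mp}{1-p},q}^{(n)}(id_{X})$, and then runs the same four-case analysis via Theorems \ref{szarek}, \ref{konig}, \ref{normaidentidade} and Lemma \ref{lemar}. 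Your framing of the evenness as supplying the $r=1$ (i.e., $\ell _{1}\hookrightarrow \ell _{\infty }$, constants $C_{1}=C_{2}=1$) specialization of Theorem \ref{cotipo} is an accurate description of what the paper does, just stated more explicitly.
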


\begin{proof}
Let $n\in \mathbb{N}\text{ and }x_{1},...,x_{n}\in E$. Consider $x_{1}^{\ast
},\ldots ,x_{n}^{\ast }\in B_{E^{\ast }}$ such that $x_{j}^{\ast
}(x_{j})=\left\Vert x_{j}\right\Vert $ for every $j=1,\ldots ,n$ . Let $%
a_{1},\ldots ,a_{n}$ be real numbers such that $\sum\limits_{j=1}^{n}\left%
\vert a_{j}\right\vert ^{\frac{1}{p}}=1$ and define
\begin{equation*}
P_{n}\colon E\longrightarrow \mathbb{R}~,~P_{n}(x)=\sum\limits_{j=1}^{n}%
\left\vert a_{j}\right\vert ^{\frac{1}{p}}x_{j}^{\ast }(x)^{m},\text{ for
every }x\in E.
\end{equation*}%
Since $m$ is even, it follows that $P(x)\geq 0,\text{ for every }x\in E.$
Hence

\begin{equation*}
\left\vert P_{n}(x)\right\vert =P_{n}(x)=\sum\limits_{j=1}^{n}\left\vert
a_{j}\right\vert ^{\frac{1}{p}}x_{j}^{\ast }(x)^{m}\geq \left\vert
a_{k}\right\vert ^{\frac{1}{p}}x_{k}^{\ast }(x)^{m},\text{ for every }x\in E%
\text{ and }k=1,...,n,
\end{equation*}%
and
\begin{equation}
\left\vert P_{n}(x_{k})\right\vert
=P_{n}(x_{k})=\sum\limits_{j=1}^{n}\left\vert a_{j}\right\vert ^{\frac{1}{p}%
}x_{j}^{\ast }(x_{k})^{m}\geq \left\vert a_{k}\right\vert ^{\frac{1}{p}%
}x_{k}^{\ast }(x_{k})^{m}=\left\vert a_{k}\right\vert ^{\frac{1}{p}%
}\left\Vert x_{k}\right\Vert ^{m},\text{ for }k=1,...,n.  \label{7661}
\end{equation}

Furthermore, for every $x\in E,$ we have

\begin{equation*}
\left\vert P_{n}(x)\right\vert =\left\vert \sum\limits_{j=1}^{n}\left\vert
a_{j}\right\vert ^{\frac{1}{p}}x_{j}^{\ast }(x)^{m}\right\vert \leq \left(
\sum\limits_{j=1}^{n}\left\vert a_{j}\right\vert ^{\frac{1}{p}}\right)
\left\Vert x\right\Vert ^{m}=\left\Vert x\right\Vert ^{m},
\end{equation*}%
and thus
\begin{equation}
\left\Vert P_{n}\right\Vert \leq 1.  \label{pn1}
\end{equation}

Therefore,
\begin{align*}
\left( \sum\limits_{j=1}^{n}\left\Vert x_{j}\right\Vert ^{mp}\left\vert
a_{j}\right\vert \right) ^{\frac{1}{p}}& =\left( \sum\limits_{j=1}^{n}\left(
\left\Vert x_{j}\right\Vert ^{m}\left\vert a_{j}\right\vert ^{\frac{1}{p}%
}\right) ^{p}\right) ^{\frac{1}{p}} \\
& \overset{(\ref{7661})}{\leq }\left( \sum\limits_{j=1}^{n}\left\vert
P_{n}(x_{j})\right\vert ^{p}\right) ^{\frac{1}{p}}.
\end{align*}%
Suppose that there exists $t\geq 0\text{ and }D>0$ such that%
\begin{align*}
\left( \sum\limits_{j=1}^{n}\left\Vert P_{n}\left( x_{j}\right) \right\Vert
^{p}\right) ^{\frac{1}{p}}& \leq D\left\Vert P_{n}\right\Vert
n^{t}\left\Vert (x_{j})_{j=1}^{n}\right\Vert _{w,q}^{m} \\
& \overset{(\ref{pn1})}{\leq }Dn^{t}\left\Vert (x_{j})_{j=1}^{n}\right\Vert
_{w,q}^{m}.
\end{align*}%
Hence
\begin{equation}
\left( \sum\limits_{j=1}^{n}\left\Vert x_{j}\right\Vert ^{mp}\left\vert
a_{j}\right\vert \right) ^{\frac{1}{p}}\leq Dn^{t}\left\Vert
(x_{j})_{j=1}^{n}\right\Vert _{w,q}^{m}.  \label{1231}
\end{equation}%
and since this last inequality holds whenever $\sum\limits_{j=1}^{n}\left%
\vert a_{j}\right\vert ^{\frac{1}{p}}=1$ and $p<1,$ we have

\begin{align*}
\left( \sum\limits_{j=1}^{n}\left\Vert x_{j}\right\Vert ^{\frac{mp}{1-p}%
}\right) ^{1-p}& =\sup \left\{ \left\vert
\sum\limits_{j=1}^{n}a_{j}\left\Vert x_{j}\right\Vert ^{mp}\right\vert
:\sum\limits_{j=1}^{n}\left\vert a_{j}\right\vert ^{\frac{1}{p}}=1\right\} \\
& \leq \sup \left\{ \sum\limits_{j=1}^{n}\left\vert a_{j}\right\vert
\left\Vert x_{j}\right\Vert ^{mp}:\sum\limits_{j=1}^{n}|a_{j}|^{\frac{1}{p}%
}=1\right\} \\
& \overset{(\ref{1231})}{\leq }\left( Dn^{t}\left\Vert
(x_{j})_{j=1}^{n}\right\Vert _{w,q}^{m}\right) ^{p}.
\end{align*}

Therefore
\begin{equation}  \label{gg1}
\frac{\left( \sum\limits_{j=1}^{n}\left\Vert x_{j}\right\Vert ^{\frac{mp}{1-p%
}}\right) ^{\frac{1-p}{mp}}}{\left\Vert (x_{j})_{j=1}^{n}\right\Vert _{w,q}}%
\leq D^{\frac{1}{m}}n^{\frac{t}{m}}.
\end{equation}

See that (\ref{gg1}) is valid for any $x_{1},...,x_{n}.$ So, for any $n$%
-dimensional subspace $X$ of $E$ we have

\begin{equation}  \label{yyy1}
\frac{\left( \sum\limits_{j=1}^{n}\left\Vert id_{X}(x_{j})\right\Vert ^{%
\frac{mp}{1-p}}\right) ^{\frac{1-p}{mp}}}{\left\Vert
(x_{j})_{j=1}^{n}\right\Vert _{w,q}}\leq D^{\frac{1}{m}}n^{\frac{t}{m}}.
\end{equation}
for all $x_{1},...,x_{n} \in X.$

Now we prove each item separately.

(a) Since
\begin{equation*}
0<p\leq \frac{q}{m+q},
\end{equation*}%
we have
\begin{equation*}
\frac{mp}{1-p}\leq q
\end{equation*}

and thus
\begin{equation*}
\frac{\left( \sum\limits_{j=1}^{n}\left\Vert id_{X}(x_{j})\right\Vert
^{q}\right) ^{\frac{1}{q}}}{\left\Vert (x_{j})_{j=1}^{n}\right\Vert _{w,q}}%
\leq D^{\frac{1}{m}}n^{\frac{t}{m}}. 
\end{equation*}
So
\begin{equation*}
\pi _{q}^{(n)}(id_{X})\leq D^{\frac{1}{m}}n^{\frac{t}{m}}.
\end{equation*}%
%
%
%
%
%
%
%
%
%

Since $1\leq q\leq 2$ by \cite[Theorem 2.8]{Diestel} we have

\begin{equation}
\pi _{2}^{(n)}(id_{X})<D^{\frac{1}{m}}n^{\frac{t}{m}},  \label{e2}
\end{equation}%
and from Theorem \ref{szarek} we conclude that
\begin{equation}
\pi _{2}(id_{X})\leq C\pi _{2}^{(n)}(id_{X}).  \label{jo}
\end{equation}%
By Theorem \ref{normaidentidade} we know that
\begin{equation*}
\pi _{2}(id_{X})=n^{1/2}
\end{equation*}%
and thus, from (\ref{e2}) and (\ref{jo}), it follows that
\begin{equation*}
\frac{1}{C}n^{1/2}\leq D^{\frac{1}{m}}n^{t/m}.
\end{equation*}%
Hence
\begin{equation*}
t\geq \frac{m}{2},
\end{equation*}%
i.e.,
\begin{equation*}
\eta _{(p,q)}^{m\text{-}pol}\left( E;\mathbb{R}\right) \geq \frac{m}{2}.
\end{equation*}

\bigskip

(b) By (\ref{yyy1}), we have
\begin{equation}
\pi _{\frac{mp}{1-p},q}^{(n)}(id_{X})\leq n^{\frac{t}{m}}D^{\frac{1}{m}}.
\label{jo1}
\end{equation}

Since $\frac{q}{m+q} \leq p\leq \frac{2}{m+2}$ we have $q \leq \frac{mp}{1-p}\leq 2.$
From (\ref{jo1}) and Lemma \ref{lemar}, there is a constant $K>0$ such that

\begin{equation*}
Kn^{\frac{2q+\frac{mp}{1-p}(q-2)}{2\frac{mp}{1-p}q}}\leq n^{t/m}D^{\frac{1}{m%
}}.
\end{equation*}%
Thus%
\begin{equation*}
\frac{t}{m}\geq \frac{mp+2}{2mp}-\frac{m+q}{mq}
\end{equation*}%
and we conclude that\bigskip

\begin{equation*}
t\geq \frac{mp+2}{2p}-\frac{m+q}{q},
\end{equation*}%
that is,
\begin{equation*}
\eta _{(p,q)}^{m\text{-}pol}\left( E;\mathbb{R}\right) \geq \frac{mp+2}{2p}-%
\frac{m+q}{q}.
\end{equation*}

\bigskip

\item[(c)] Since $q\geq 2$, we have
\begin{equation*}
\frac{\left( \sum\limits_{j=1}^{n}\left\Vert id_{X}(x_{j})\right\Vert ^{%
\frac{mp}{1-p}}\right) ^{\frac{1-p}{mp}}}{\left\Vert
(x_{j})_{j=1}^{n}\right\Vert _{w,2}}\leq D^{\frac{1}{m}}n^{\frac{t}{m}},
\end{equation*}%
for all $x_{1},...,x_{n}\in X$. But $\frac{2}{m+2}\geq p$ implies that $%
\frac{mp}{1-p}\leq 2;$ hence
\begin{equation*}
\frac{\left( \sum\limits_{j=1}^{n}\left\Vert id_{X}(x_{j})\right\Vert
^{2}\right) ^{\frac{1}{2}}}{\left\Vert (x_{j})_{j=1}^{n}\right\Vert _{w,2}}%
\leq D^{\frac{1}{m}}n^{\frac{t}{m}},
\end{equation*}%
and thus
\begin{equation*}
\pi _{2}^{(n)}(id_{X})\leq D^{\frac{1}{m}}n^{\frac{t}{m}}.
\end{equation*}

From Theorem \ref{szarek} we have
\begin{equation*}
\pi _{2}(id_{X})\leq C\pi _{2}^{(n)}(id_{X})
\end{equation*}

and from Theorem \ref{normaidentidade}, we have
\begin{equation*}
\frac{1}{C}n^{\frac{1}{2}}=\frac{1}{C}\pi _{2}(id_{X})\leq n^{t/m}D^{\frac{1%
}{m}}.
\end{equation*}%
So, we conclude that%
\begin{equation*}
t\geq \frac{m}{2}.
\end{equation*}%
and
\begin{equation*}
\eta _{(p,q)}^{m\text{-}pol}\left( E;\mathbb{R}\right) \geq \frac{m}{2}.
\end{equation*}

\bigskip

\item[(d)] Since $q\geq 2,$ we obtain
\begin{equation*}
\frac{\left( \sum\limits_{j=1}^{n}\left\Vert id_{X}(x_{j})\right\Vert ^{%
\frac{mp}{1-p}}\right) ^{\frac{1-p}{mp}}}{\left\Vert
(x_{j})_{j=1}^{n}\right\Vert _{w,2}}\leq D^{\frac{1}{m}}n^{\frac{t}{m}},
\end{equation*}%
for all $x_{1},...,x_{n}\in X$.

So
\begin{equation*}
\pi _{\frac{mp}{1-p},2}^{(n)}(id_{X})\leq n^{\frac{t}{m}}D^{\frac{1}{m}}.
\end{equation*}

But $\frac{2}{m+2}<p$ implies that $\frac{mp}{1-p}>2$, and from Theorem \ref%
{szarek}
\begin{equation*}
\pi _{\frac{mp}{1-p},2}(id_{X})\leq C\pi _{\frac{mp}{1-p},2}^{(n)}(id_{X})
\end{equation*}

By Theorem \ref{konig}, there is a constant $A>0$ such that

\begin{equation*}
An^{\frac{1-p}{mp}}\leq \pi _{\frac{mp}{1-p},2}(id_{X}),
\end{equation*}%
thus
\begin{equation*}
\frac{A}{C}n^{\frac{1-p}{mp}}\leq n^{t/m}D^{\frac{1}{m}},
\end{equation*}%
we conclude that%
\begin{equation*}
t\geq \frac{1-p}{p}.
\end{equation*}%
so
\begin{equation*}
\eta _{(p,q)}^{m\text{-}pol}\left( E;\mathbb{R}\right) \geq \frac{1-p}{p}.
\end{equation*}

\bigskip
\end{proof}

\bigskip

\begin{remark}
As in previous theorem, in this result we have a clear \textquotedblleft
continuity\textquotedblright\ in our estimates.
\end{remark}

\end{document}